\newcommand{\ud}[0]{\,\mathrm{d}}
\newcommand{\dist}[0]{\operatorname{dist}}
\newcommand{\abs}[1]{|#1|}
\newcommand{\Norm}[2]{\|#1\|_{#2}}
\newcommand{\pair}[2]{\langle #1,#2 \rangle}
\newcommand{\ave}[1]{\langle #1\rangle}
\newcommand{\lspan}[0]{\operatorname{span}}
\newcommand{\bddlin}[0]{\mathscr{L}}
\newcommand{\BMO}[0]{\operatorname{BMO}}
\newcommand{\R}{\mathbb{R}}
\newcommand{\C}{\mathbb{C}}
\newcommand{\N}{\mathbb{N}}
\newcommand{\Z}{\mathbb{Z}}
\renewcommand{\P}[0]{\mathbb{P}}
\newcommand{\E}[0]{\mathbb{E}}
\swapnumbers \numberwithin{equation}{section}
\theoremstyle{plain}
\newtheorem{theorem}[equation]{Theorem}
\newtheorem{corollary}[equation]{Corollary}
\newtheorem{lemma}[equation]{Lemma}
\theoremstyle{definition}
\newtheorem{definition}[equation]{Definition}
\theoremstyle{remark}
\newtheorem{remark}[equation]{Remark}
\begin{document}

\title{On the sense of convergence in the dyadic representation theorem}

\author{Tuomas Hyt\"onen}
\address{Department of Mathematics and Statistics, P.O. Box 68 (Pietari Kalmin katu 5), FI-00014 University of Helsinki, Finland}
\email{tuomas.hytonen@helsinki.fi}
\address{(Address as of 1 Jan 2024:) Department of Mathematics and Systems Analysis, Aalto University, P.O. Box 11100, FI-00076 Aalto, Finland}
\email{tuomas.p.hytonen@aalto.fi}


\thanks{The author was supported by the Academy of Finland through project No. 346314 (Finnish Centre of Excellence in Randomness and Structures ``FiRST'')}
\keywords{Singular integral, dyadic shift}
\subjclass[2010]{42B20}


\maketitle


\begin{abstract}
The dyadic representation of any singular integral operator, as an average of dyadic model operators, has found many applications. While for many purposes it is enough to have such a representation for a ``suitable class'' of test functions, we show that, under quite general assumptions (essentially minimal ones to make sense of the formula), the representation is actually valid for all pairs $(f,g)\in L^p(\R^d)\times L^{p'}(\R^d)$, not just test functions.
\end{abstract}


\section{Introduction}

The dyadic representation of Calder\'on--Zygmund operators has its roots in the works of Figiel \cite{Figiel:90}, Nazarov, Treil and Volberg \cite{NTV:Tb}, and Petermichl \cite{Pet}. 
A prototype of several recent versions was established by the author as a tool to settle the $A_2$ conjecture about the sharp weighted bounds for singular integrals \cite{Hytonen:A2}; other applications of dyadic representation include estimates for Banach space -valued extensions (e.g., \cite{PS:2014}) and commutators (e.g., \cite{HLW:17}) of singular integrals.

For the typical applications, it suffices to have the representation formula for a suitable dense class of test functions. The representation then leads to favourable estimates for the same functions, and these estimates can then be extended to general functions by density. However, the dyadic representation may also be seen as an independent structure theorem for singular integral operators, and from this point of view it is natural to inquire about the maximal generality and the sense in which such a formula is valid. Besides intrinsic interest, this should streamline applications of the formula to new situations, hopefully reducing the need of less interesting and potentially tedious approximation and density considerations as intermediate steps.

The goal of this paper is to establish a version of the general dyadic representation theorem of \cite{Hytonen:A2} under essentially minimal assumptions on both the operator that we wish to decompose and the functions on which it acts. A related study of the sense of convergence of Petermichl's special dyadic representation for the Hilbert transform \cite{Pet} was carried out in \cite{Hytonen:Pet}. Somewhat more distant cousins of the present work are the investigations by Wilson \cite{Wilson:converge,Wilson:convergeH1} of the sense of convergence of the classical Calder\'on reproducing formula.

Before going into the present contribution, let us review the existing dyadic representations in some more detail. In the original application of \cite{Hytonen:A2}, the interest was in quantifying the norms of operators that are already known to be bounded qualitatively, and hence the representation theorem was stated and proved under the assumption of $L^2(\R^d)$ (or weighted $L^2(w)$) boundedness of the operators in question. However, it was subsequently observed that, with little modification, the same ideas can also be used to prove this $L^2(\R^d)$-boundedness from \emph{a priori} weaker assumptions in the style of the $T(1)$ theorem of David and Journ\'e \cite{DJ:T1} and thus, if fact, to give another proof of their result  \cite{Hytonen:Expo}.

This new approach to the $T(1)$ theorem via a dyadic representation formula has turned out to be quite flexible in extending the $T(1)$  theorem to other situations. Thanks to Martikainen's dyadic representation on product spaces \cite{Mart:12}, Journ\'e's $T(1)$ theorem \cite{Journe:85} for bi-parameter singular integrals was obtained with simplified assumptions by Martikainen \cite{Mart:12} and recently extended to matrix-weighted spaces by Domelevo, Kakaroumpas, Petermichl, and Soler~i Gibert \cite{DKPS:24}. As another recent example, Li, Martikainen, Vuorinen, and the present author \cite{HLMV:Zygmund} found a dyadic representation for multiparameter singular integrals adapted to so-called Zygmund dilations and used this to obtain the first $T(1)$ theorem in this setting, where previous results, notably by \cite{FP:97,HLLT:19}, were restricted to convolution-type operators.

In writing down representation formulas for singular operators that are only weakly defined to begin with, as in the applications to $T(1)$ theorems just mentioned, some issues need to be addressed:
\begin{enumerate}
  \item\label{it:init} What is the initial class of test functions on which our operator acts?
  \item\label{it:reprInit} Is the prospective representation formula valid for all functions in this class?
  \item\label{it:bdExt} Does the representation formula provide estimates that allow a bounded extension of the operator from the initial class to a larger space of interest?
  \item\label{it:reprExt} Once the operator has been extended, does the representation formula remain valid for all functions in the larger space?
\end{enumerate}
As far as questions \eqref{it:reprInit} and \eqref{it:reprExt} are concerned, the complexity of the representation formula is also of some interest. As a rule of thumb, the simpler the representation, the more flexible it is to apply to new instances of \eqref{it:bdExt}.
Two key elements of all results that we call ``dyadic representation theorems'' are: (a) an expectation over a random choice of the underlying dyadic system and (b), for each such dyadic system, a series of ``dyadic singular integrals'' adapted to this particular system. However, an additional element that may or may not be present in different versions is: (c) an initial truncation of the dyadic operators (say, to boundedly many scales of dyadic cubes only), combined with a limit over the truncation parameter in the end. Thus the representation formula takes the form of either
\begin{equation}\label{eq:DRTsimple}
  \pair{Tf}{g}=\E\sum_k \pair{S_{k,\mathscr D} f}{g}
\end{equation}
or
\begin{equation}\label{eq:DRTlimit}
  \pair{Tf}{g}=\lim_{n\to\infty}\E^{(n)}\sum_k \pair{S_{k,\mathscr D}^{(n)} f}{g}
\end{equation}
for appropriate dyadic operators $S_{k,\mathscr D}$ (which also come in a variety of forms, but we refrain from dwelling too deep into this) or their truncated forms $S_{k,\mathscr D}^{(n)}$. In \eqref{eq:DRTlimit}, we also allow for the possibility that the probability space over which the average is taken may depend on the truncation parameter $n$. If $\E^{(n)}\equiv\E$ is independent of $n$, then \eqref{eq:DRTsimple} could be deduced from \eqref{eq:DRTlimit} provided that each $S_{k,\mathscr D}$ is the weak limit of its truncated versions and if, moreover, we can justify the exchange of the limit with the expectation and the sum (e.g. by dominated convergence).

The prototype dyadic representation of \cite[Theorem 3.1]{Hytonen:A2} was already stated for all $f\in L^2(\sigma)$ and $g\in L^2(w)$ (where $w$ is any Muckenhoupt $A_2$-weight and $\sigma=1/w$ the corresponding dual weight), thus in particular for $f,g\in L^2(\R^d)$, but it takes the semi-complicated form \eqref{eq:DRTlimit} with $\E^{(n)}=\E$. In \cite{HPTV}, a simpler formula of type \eqref{eq:DRTsimple} was achieved; the formulation of \cite[Theorem 4.1]{HPTV} as an operator identity $T=\E\sum_k S_{k,\mathscr D}$  is somewhat bold, but in the proof \cite[Sec. 4.3]{HPTV} it is explained that the identities should be ``understood in the weak sense, as equalities of the bilinear forms for $f, g \in C_0^\infty$''. A similar approach, with $f\in C_0^1(\R^d)\otimes X$ and $g\in C_0^1(\R^d)\otimes Y^*$ for Banach spaces $X$ and $Y$, is followed in \cite{HH:2016}, where the representation theorem is extended to Banach space \mbox{-valued} functions. In all these results, the $L^2(\R^d)$ boundedness or similar is assumed as a prerequisite for the representation theorem.

As for the $T(1)$-type theorems, where a representation formula is used to deduce the $L^2(\R^d)$-boundedness (or similar) from {\em a priori} weaker assumptions, a representation of type \eqref{eq:DRTlimit} is often employed, e.g. by \cite{GH:2018,Volberg:15}. Here one uses dyadic test functions that are constant on sufficiently small dyadic cubes (say, those of side-length $2^{-n}$), and randomises only over the larger scales of cubes (cf. \cite[\S 2.1]{GH:2018} or \cite[\S 2]{Volberg:15}). Thus, on the one hand, the limit in \eqref{eq:DRTlimit} trivialises in some sense (for each pair of dyadic test functions $f$ and $g$, the quantity inside the limit is constant for all $n\geq n(f,g)$). But, on the other hand, the expectation $\E^{(n)}$ is genuinely $n$-dependent, which is an obstacle to reducing \eqref{eq:DRTlimit} to the simpler form \eqref{eq:DRTsimple}. This approach is also followed in the recent monograph \cite[Chapter 12]{HNVW3}.

The goal of this paper is to obtain a representation formula of the simpler type \eqref{eq:DRTsimple} and to provide a fairly satisfactory answer, in our opinion, to questions \eqref{it:init} through \eqref{it:reprExt} above. Roughly speaking, our answer reads as follows:
\begin{enumerate}
  \item\label{ans:init} The initial test function space $\mathscr I$ consists of all linear combinations of indicators of axes-parallel rectangles in $\R^d$.
  \item\label{ans:reprInit} We obtain a representation formula of type \eqref{eq:DRTlimit} for all $f,g\in\mathscr I$, provided that $T$ is a weakly defined singular integral operator with standard Calder\'on--Zygmund kernel bounds, the usual assumptions of the (dyadic) $T(1)$ theorem, plus a slight strengthening of the usual (dyadic) weak boundedness property and a technical measurability condition.
  \item\label{ans:bdExt} The representation gives the extension of $T$ to a bounded operator on $L^p(\R^d)$ for $p\in(1,\infty)$.
  \item\label{ans:reprExt} The representation remains valid for all $f\in L^p(\R^d)$ and $g\in L^{p'}(\R^d)$.
\end{enumerate}
We refer the reader to Theorem \ref{thm:T1} for the precise formulation. 

The rest of this paper is structured as follows. In the short \S\ref{sec:test}, we provide some heuristics to justify our choice of the test functions in answer \eqref{ans:init}. In \S\ref{sec:setup}, we recall the relevant definitions of weakly defined singular integral operators and Calder\'on--Zygmund operators with Dini-continuous kernels, give a statement of our main result in Theorem \ref{thm:T1}, and make several comments about its assumptions in relation to previous literature. The proof of this theorem is then divided over the remaining sections of the paper. Our main emphasis consists of justifying the details of the identities involved in the representation, including the relevant measurability and convergence issues. The related estimates for various terms of the representation can be mostly quoted from the existing literature, where we mostly refer the reader to relevant places in \cite[Chapter 12]{HNVW3} and only repeat a few selected steps for the sake of readability. In \S\ref{sec:BCR}, we establish an initial decomposition by means of the so-called BCR (Beylkin--Coifman--Rochberg) algorithm. The main novelty here is that we make a finite version of the decomposition, with an explicit expression for the error term that is absent in the usual infinite version. The error term is then analysed in \S\ref{sec:error}. In \S\ref{sec:main}, we turn to the main term and isolate its ``diagonal'' and ``off-diagonal'' parts. Up to this point, the analysis takes place relative to a fixed dyadic system, but the next step requires an averaging over a random choice of the dyadic system. To make this rigorous, we dedicate \S\ref{sec:meas} to relevant measurability issues. These consideration are largely absent in the previous literature; in particular, they are avoided in the finitary model of \cite{GH:2018,Volberg:15} and \cite[Chapter]{HNVW3} but at the price of giving only the weaker representation of type \eqref{eq:DRTlimit}. With the measurability issues settled, we carry out the averaging process in \S\ref{sec:off}, identifying the average of the off-diagonal part with a series of (generalised) dyadic shifts. We note that the precise form of our shifts is that introduced by Grau de la Herr\'{a}n and the author in \cite{GH:2018}, which somewhat deviates from the original definition of \cite{Hytonen:A2}. These new shifts now seem to provide the most efficient representation currently available, manifested in at least two ways: the representation involves a single infinite series over a ``complexity'' parameter $k\in\N$, in contrast to a double series over $(m,n)\in\N^2$ in \cite{Hytonen:A2}, and this representation can be shown to converge under minimal Dini-continuity assumptions of the kernel, in contrast to the standard H\"older continuity assumptions used in \cite{Hytonen:A2}. This point has been recently made and exploited by Airta et al. \cite{AMV:prod,AMV:UMD}. In \S\ref{sec:concl}, we put the pieces together to complete the proof of the main Theorem \ref{thm:T1}. In the final \S\ref{sec:UMD}, we make some comments about the extension of these results to the setting of Banach space -valued functions.


\section{Choice of the test functions}\label{sec:test}

In the most general terms, we would like to have a framework, where a bilinear form $\tau(f,g)$ is initially defined for some pairs of test functions $f$ and $g$; we would then like to make estimates that allow us to conclude that the action of $\tau$ can be boundedly extended to pairs of functions in a larger class like $L^p(\R^d)\times L^{p'}(\R^d)$. At this level of generality, various choices are certainly possible, but to be able to make any use of the bilinear nature of $\tau$ with expansions of the functions, it seems reasonable to at least require that the class of test functions be a vector space.

To be a bit more specific, we would like to expand $f$ and $g$ in terms of a Haar basis related to a system of dyadic cubes $\mathscr D$. This leads to the natural restriction that our test function space should at least contain all (linear combinations of) indicators $1_Q$ of cubes $Q\in\mathscr D$.

Moreover, following the framework of \cite{NTV:Tb}, we would like to consider a random choice of the dyadic system $\mathscr D$. Now, every cube (by which we always understand an axes-parallel cube) $Q$ of sidelength $\ell(Q)\in 2^{\Z}$ will be a member of some dyadic system $\mathscr D$ that may arise via this random choice; thus our test function space should contain the indicators of all cubes $Q$ with $\ell(Q)\in 2^{\Z}$. But any (axes-parallel) rectangle $R$ can be realised as the intersection $P\cap Q$ of some cubes with $\ell(P)=\ell(Q)\in 2^{\Z}$. Although $1_{P\cap Q}=1_P\cdot 1_Q$ is not in the linear span of $1_P$ and $1_Q$, it is clearly closely related. For instance, the function $1_P-1_Q=1_{P\setminus Q}-1_{Q\setminus P}$ belongs to the span; if we impose the mild additional assumption that the test function class should also be stable under positive and negative parts, then $1_{P\setminus Q}$ and hence $1_P-1_{P\setminus Q}=1_{P\cap Q}=1_R$ will also be admissible test functions.

This discussion motivates the following:

\begin{definition}\label{def:testf}
The test function classes $\mathscr I$ and $\mathscr I_0$ are defined as
\begin{equation*}
  \mathscr I :=\lspan\{1_R:R\subseteq\R^d\text{ is a rectangle}\}, \qquad
  \mathscr I_0 :=\Big\{h\in\mathscr I:\int h=0\Big\}.
\end{equation*}
If $\mathscr D$ is a fixed system of dyadic cubes, we also denote
\begin{equation*}
  \mathscr I_{\mathscr D}:=\lspan\{1_Q:Q\in\mathscr D\}\subsetneq \mathscr I,\qquad\mathscr I_{\mathscr D,0}:=\mathscr I_{\mathscr D}\cap\mathscr I_0.
\end{equation*}
\end{definition}

\section{Set-up and statement of the representation theorem}\label{sec:setup}

For convenience when working with cubes, we denote by $\abs{\ }$ the $\ell^\infty$ (and not the Euclidean $\ell^2$) norm of $\R^d$.

\begin{definition}\label{def:CZK}
Let $\dot\R^{2d}:=\{(x,y)\in\R^{2d}:x\neq y\}$. Let $\omega:[0,\frac12]\to[0,\infty)$ be increasing.
We say that $K:\dot\R^{2d}\to\C$ is an $\omega$-Calder\'on--Zygmund kernel, if
\begin{equation*}
  \abs{K(x,y)}\leq\frac{c_K}{\abs{x-y}^d},
\end{equation*}
for all $(x,y)\in\dot\R^{2d}$, and
\begin{equation*}
  \abs{K(x,y)-K(x',y)}+\abs{K(y,x)-K(y,x')}\leq\omega\Big(\frac{\abs{x-x'}}{\abs{x-y}}\Big)\frac{1}{\abs{x-y}^d},
\end{equation*}
whenever $\abs{x-x'}<\frac12\abs{x-y}$.
\end{definition}

We denote
\begin{equation*}
  \Norm{\omega}{\operatorname{Dini}^s}:=\int_0^{\frac12}\omega(u)(\log\frac1u)^s\frac{\ud u}{u},\quad
  \Norm{\omega}{\operatorname{Dini}}:=\Norm{\omega}{\operatorname{Dini}^0}.
\end{equation*}

\begin{definition}\label{def:tau}
We say that $\tau:\mathscr I\times\mathscr I\to\C$ is a weakly defined $\omega$-Calder\'on--Zygmund operator with kernel $K$, if $K$ is an $\omega$-Calder\'on--Zygmund kernel and
\begin{equation}\label{eq:kernelRep}
  \tau(1_R,1_S)=\iint K(x,y)1_R(y)1_S(x)\ud y\ud x
\end{equation}
whenever $R,S\subseteq\R^d$ are disjoint rectangles.
\end{definition}

It is elementary to check that $\abs{x-y}^{-d}$ is integrable over a product of disjoint rectangles, and hence the integral in \eqref{eq:kernelRep} exists under the stated assumptions.

\begin{lemma}\label{lem:T1}
If $\tau$ is a weakly defined $\omega$-Calder\'on--Zygmund operator with $\Norm{\omega}{\operatorname{Dini}^0}<\infty$, and $h\in\mathscr I_0$, then the expression
\begin{equation}\label{eq:T1}
   \tau(1_{3Q},h)+\iint [K(x,y)-K(z_Q,y)]1_{(3Q)^c}(y)h(x)\ud x\ud y
\end{equation}
is well-defined for any cube $Q$ (with centre $z_Q$) that contains the support of $h$, and its value is independent of the choice of such a cube.
\end{lemma}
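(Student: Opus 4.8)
The plan is to verify two things: first, that the expression \eqref{eq:T1} is well-defined for a single admissible cube $Q$, i.e. that both terms make sense; and second, that its value does not change if $Q$ is replaced by another admissible cube $Q'$. For the first point, the term $\tau(1_{3Q},h)$ is simply the value of the bilinear form on two members of $\mathscr I$, so it is defined by hypothesis. For the integral term, I would write $h=\sum_j c_j 1_{R_j}$ as a finite linear combination of indicators of rectangles, all contained in $Q$, and reduce to checking that for each rectangle $R\subseteq Q$ the integral $\iint_{(3Q)^c\times R}\abs{K(x,y)-K(z_Q,y)}\ud x\ud y$ is finite. On $(3Q)^c$ one has $\abs{x-z_Q}\geq\tfrac12\ell(Q)$ (with $\abs{\cdot}$ the $\ell^\infty$ norm) while for $y\in R\subseteq Q$ one has $\abs{y-z_Q}\leq\tfrac12\ell(Q)\leq\abs{x-z_Q}$, so with the roles of the first two kernel arguments as in the second displayed bound of Definition \ref{def:CZK} (using the $\abs{K(y,x)-K(y,x')}$ part) we get the pointwise bound $\omega\bigl(\abs{y-z_Q}/\abs{x-z_Q}\bigr)\abs{x-z_Q}^{-d}$; integrating first in $y\in R$ (volume $\leq\abs{Q}$) and then in $x$ over the dyadic-annular decomposition $2^m Q\setminus 2^{m-1}Q$, $m\geq 2$, produces a bound $\lesssim\abs{Q}\sum_{m\geq 1}\omega(2^{-m})2^{-md}\abs{Q}^{-1}\lesssim\Norm{\omega}{\operatorname{Dini}^0}<\infty$. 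This shows absolute convergence, so the integral in \eqref{eq:T1} exists.

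For the independence of the choice of cube, it suffices by symmetry to compare $Q$ with a second admissible cube $Q'\supseteq\supp h$; and since any two such cubes are both contained in, and hence comparable through, a common larger admissible cube, I may assume $Q\subseteq Q'$. Denote by $E(Q)$ the full expression in \eqref{eq:T1}. I want to show $E(Q')-E(Q)=0$. The difference of the first terms is $\tau(1_{3Q'}-1_{3Q},h)=\tau(1_{3Q'\setminus 3Q},h)$, and since $3Q'\setminus 3Q$ is a (finite union of) rectangle(s) disjoint from $\supp h\subseteq Q\subseteq 3Q$, the kernel representation \eqref{eq:kernelRep} (extended from single rectangles to their finite linear combinations by bilinearity) gives
\begin{equation*}
  \tau(1_{3Q'}-1_{3Q},h)=\iint K(x,y)1_{3Q'\setminus 3Q}(y)h(x)\ud y\ud x.
\end{equation*}
The difference of the integral terms in \eqref{eq:T1} is
\begin{equation*}
  \iint\Bigl([K(x,y)-K(z_{Q'},y)]1_{(3Q')^c}(y)-[K(x,y)-K(z_Q,y)]1_{(3Q)^c}(y)\Bigr)h(x)\ud x\ud y.
\end{equation*}
Adding the two, the kernel term $K(x,y)$ appears with total $y$-indicator $1_{3Q'\setminus 3Q}+1_{(3Q')^c}-1_{(3Q)^c}=0$, so it cancels, and we are left with
\begin{equation*}
  E(Q')-E(Q)=\iint\bigl(K(z_Q,y)1_{(3Q)^c}(y)-K(z_{Q'},y)1_{(3Q')^c}(y)\bigr)h(x)\ud x\ud y.
\end{equation*}
Now $\int h(x)\ud x=0$ because $h\in\mathscr I_0$, and for each fixed $y$ the inner factor $K(z_Q,y)1_{(3Q)^c}(y)-K(z_{Q'},y)1_{(3Q')^c}(y)$ is a constant in $x$; provided we may integrate in $x$ first, the $x$-integral of $h$ vanishes and the whole expression is $0$.

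The one genuine subtlety — and the step I expect to need the most care — is justifying the manipulations above, namely the rearrangement and cancellation of the kernel term and the interchange of the order of integration needed to use $\int h=0$. This is licensed by Fubini–Tonelli once I know that each of the separate double integrals appearing (the one over $3Q'\setminus 3Q\times\supp h$ with $\abs{K(x,y)}\leq c_K\abs{x-y}^{-d}$, and the two tail integrals with the $\omega$-smoothness bound) is absolutely convergent; the first is finite because $3Q'\setminus 3Q$ and $\supp h$ are disjoint rectangles and $\abs{x-y}^{-d}$ is integrable over a product of disjoint rectangles (as noted right after Definition \ref{def:tau}), and the tail integrals are finite by exactly the estimate carried out in the first paragraph applied to $Q$ and to $Q'$. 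With absolute convergence of every piece in hand, all the regroupings are legitimate, and the proof is complete.
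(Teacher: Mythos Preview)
The paper does not prove this lemma (it is stated and then immediately used as the basis for Definition~\ref{def:T1}), so there is no ``paper's proof'' to compare against. Your argument is the standard one and is essentially correct: absolute convergence of the tail integral via kernel smoothness plus the Dini condition, then independence by expressing $\tau(1_{3Q'\setminus 3Q},h)$ through the kernel, cancelling the $K(x,y)$ contributions, and using $\int h=0$.

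Two minor points you should clean up. First, in the well-definedness estimate you have swapped the roles of $x$ and $y$ relative to \eqref{eq:T1}: there one has $y\in(3Q)^c$ and $x\in\supp h\subseteq Q$, so the relevant bound is the \emph{first} part $\abs{K(x,y)-K(x',y)}$ of Definition~\ref{def:CZK} (difference in the first argument, second argument far away), giving
\[
  \abs{K(x,y)-K(z_Q,y)}\leq\omega\Big(\frac{\abs{x-z_Q}}{\abs{x-y}}\Big)\frac{1}{\abs{x-y}^d}
  \lesssim\omega\Big(\frac{\ell(Q)}{\abs{y-z_Q}}\Big)\frac{1}{\abs{y-z_Q}^d}.
\]
Second, in your annular computation the factor $2^{-md}$ is spurious: on $2^mQ\setminus 2^{m-1}Q$ the size $\asymp(2^m\ell(Q))^{-d}$ is cancelled by the volume $\asymp(2^m\ell(Q))^d$, so each annulus contributes $\asymp\omega(2^{-m})$, and after the $x$-integration over $R$ the total is $\lesssim\abs{Q}\sum_m\omega(2^{-m})\lesssim\abs{Q}\,\Norm{\omega}{\operatorname{Dini}}$. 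This does not affect the conclusion of finiteness.

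Your Fubini justification in the independence step is correct and worth stating explicitly: since each of the three constituent double integrals is absolutely convergent, their sum is absolutely convergent by the pointwise triangle inequality, and hence so is the simplified integral
\[
  \iint\big[K(z_Q,y)1_{(3Q)^c}(y)-K(z_{Q'},y)1_{(3Q')^c}(y)\big]h(x)\ud x\ud y,
\]
even though neither summand is individually integrable. Fubini then permits the $x$-integration first, and $\int h=0$ finishes the argument.
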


\begin{definition}\label{def:T1}
Under the assumptions of Lemma \ref{lem:T1}, we denote the value of \eqref{eq:T1} by $\tau(1,h)$. The expression $\tau(h,1)$ is defined analogously.
\end{definition}

\begin{theorem}\label{thm:T1}
Let $\tau:\mathscr I\times\mathscr I\to\C$ be a weakly defined $\omega$-Calder\'on--Zygmund operator.
Let $p\in(1,\infty)$, denote $p^*:=\max(p,p')$, and suppose that $\Norm{\omega}{\operatorname{Dini}^{1-\frac{1}{p^*}}}<\infty$. Then the following conditions are equivalent:
\begin{enumerate}
  \item\label{it:T1bd} There is a bounded linear operator $T\in\bddlin(L^p(\R^d))$ such that $\tau(f,g)=\pair{Tf}{g}$ for all $f,g\in\mathscr I$.
  \item\label{it:T1cont} The bilinear form $\tau$ satisfies:
  \begin{enumerate}
    \item\label{it:T1T1} the $T(1)$ conditions: for some $b_1,b_2\in\BMO(\R^d)$ and all $h\in\mathscr I_0$,
\begin{equation*}
  \tau(1,h)=\pair{b_1}{h},\qquad\tau(h,1)=\pair{h}{b_2};
\end{equation*}    
    \item\label{it:T1wbp} the weak boundedness property $\abs{\tau(1_Q,1_Q)}\leq\Norm{\tau}{wbp}\abs{Q}$ for all cubes $Q\subseteq\R^d$;
    \item\label{it:T1swbp} the semi-weak boundedness property $\abs{\tau(1_R,1_S)}\leq\Norm{\tau}{swbp}\abs{Q}$ whenever $R,S\subseteq Q$ are rectangles contained in a cube $Q\subseteq\R^d$;
    \item\label{it:T1contcont} the weak continuity property that $z\in\R^d\mapsto\tau(1_{Q+z},1_{Q+z})$ is continuous for all cubes $Q\subseteq\R^d$.
  \end{enumerate}
  \item\label{it:T1meas} The form $\tau$ satisfies conditions as in \eqref{it:T1cont}, except that \eqref{it:T1contcont} is replaced by
  \begin{enumerate}\setcounter{enumii}{4}
  \item\label{it:T1measmeas}   the weak measurability property that  $z\in\R^d\mapsto\tau(1_{Q+z},1_{Q+z})$ is measurable for all cubes $Q\subseteq\R^d$.
  \end{enumerate}
\end{enumerate}
Under these equivalent conditions, we have the representation formula
\begin{equation}\label{eq:DRTmain}
\begin{split}
  \pair{Tf}{g}
  &=\E\Big(\pair{\mathfrak H_{\mathscr D}f}{g}+\pair{\Pi_{b_1,\mathscr D}f}{g}+\pair{\Pi_{b_2,\mathscr D}^*f}{g}\Big) \\
  &\qquad+\E\sum_{\gamma\in\{0,1\}^2\setminus\{0\}}\sum_{k=2}^\infty\omega(2^{-k})\pair{A^{(\gamma,k)}_{\mathscr D}f}{g},
\end{split}
\end{equation}
valid for all $f\in L^p(\R^d)$ and $g\in L^{p'}(\R^d)$, where $\mathfrak H_{\mathscr D}$ is a Haar multiplier, $\Pi_{b_i,\mathscr D}$ ($i=1,2$) are dyadic paraproducts, and $A^{(\gamma,k)}_{\mathscr D}$ are certain generalised dyadic shifts of order $k$ and type $\gamma$, each related to a dyadic system $\mathscr D$, and $\E$ is the expectation over a random choice of the dyadic system. (See Section \ref{sec:concl} for their definitions.) For each fixed $\mathscr D$, the series over $k\geq 2$ in \eqref{eq:DRTmain} converges absolutely, and this convergence is uniform with respect to the choice of $\mathscr D$.
\end{theorem}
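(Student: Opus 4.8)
\emph{Structure of the proof.} I would establish the three conditions equivalent cyclically, $\eqref{it:T1bd}\Rightarrow\eqref{it:T1cont}\Rightarrow\eqref{it:T1meas}\Rightarrow\eqref{it:T1bd}$, obtaining the representation \eqref{eq:DRTmain} as a by-product of the last (and only substantial) implication. The implication $\eqref{it:T1bd}\Rightarrow\eqref{it:T1cont}$ is soft: \eqref{it:T1wbp} and \eqref{it:T1swbp} follow from H\"older's inequality, since $\abs{\tau(1_R,1_S)}=\abs{\pair{T1_R}{1_S}}\leq\Norm{T}{\bddlin(L^p)}\abs{R}^{1/p}\abs{S}^{1/p'}\leq\Norm{T}{\bddlin(L^p)}\abs{Q}$ whenever $R,S\subseteq Q$; the $T(1)$ conditions \eqref{it:T1T1} follow by observing that the truncated expression \eqref{eq:T1} defining $\tau(1,h)$ is exactly the classical formula for $\pair{T1}{h}$ ($h\in\mathscr I_0$), so that $b_1:=T1\in\BMO(\R^d)$ --- defined modulo constants, which is harmless against $\mathscr I_0$ --- does the job, and symmetrically $b_2:=T^*1$; and \eqref{it:T1contcont} follows from the strong continuity of translations on $L^p$ and $L^{p'}$ (both finite since $1<p<\infty$) together with bilinearity and boundedness of $T$. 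The implication $\eqref{it:T1cont}\Rightarrow\eqref{it:T1meas}$ is trivial, continuity implying measurability.

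\emph{The main implication: fixed dyadic system.} Assume \eqref{it:T1meas} and fix $f,g\in\mathscr I$ and a dyadic system $\mathscr D$ from the random family. Writing $E_n^{\mathscr D}$ for the conditional expectation onto the cubes of $\mathscr D$ of side-length $2^{-n}$, one has $E_n^{\mathscr D}f\in\mathscr I_{\mathscr D}$ (a finite sum, since $f$ has bounded support) and $E_n^{\mathscr D}f\to f$ in every $L^q(\R^d)$, $q<\infty$. In \S\ref{sec:BCR} one applies a \emph{finitary} form of the BCR algorithm to $\tau(E_N^{\mathscr D}f,E_N^{\mathscr D}g)$: this produces a finite main term --- a bilinear sum over pairs of Haar frequencies at scales between $-N$ and $N$, reorganised by the BCR rule --- together with an \emph{explicit} error term collecting the coarsest- and finest-scale contributions. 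Unlike the finitary $T(1)$ models of \cite{GH:2018,Volberg:15,HNVW3}, where $E_Nf=f$ exactly, the Haar expansion of a rectangle indicator is genuinely infinite, so one must prove (\S\ref{sec:error}) that $\tau(E_N^{\mathscr D}f,E_N^{\mathscr D}g)\to\tau(f,g)$ and that the error term tends to $0$ as $N\to\infty$; this uses the kernel size and smoothness bounds together with \eqref{it:T1T1}--\eqref{it:T1swbp}, and it is precisely here that the semi-weak boundedness \eqref{it:T1swbp} is needed, to control the cubes straddling the boundary of a rectangle, where disjointness --- hence the kernel representation \eqref{eq:kernelRep} --- is unavailable. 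The surviving finite main term is then split (\S\ref{sec:main}) into paraproduct-type terms carrying $b_1$ and $b_2$; a ``diagonal'' part, which assembles into a Haar multiplier $\mathfrak H_{\mathscr D}$ and where \eqref{it:T1wbp} and \eqref{it:T1swbp} bound the coefficients $\tau(1_R,1_S)$; and an ``off-diagonal'' remainder indexed by pairs $(Q,P)$ of $\mathscr D$-cubes with $P$ strictly nested in $Q$.

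\emph{Randomisation, measurability, estimates.} Averaging over the random choice of $\mathscr D$ (\S\ref{sec:off}), the off-diagonal remainder is matched with $\E\sum_{\gamma\in\{0,1\}^2\setminus\{0\}}\sum_{k\geq2}\omega(2^{-k})\pair{A^{(\gamma,k)}_{\mathscr D}f}{g}$: a pair $(Q,P)$ with $\ell(P)=2^{-k}\ell(Q)$ feeds a generalised shift of complexity $k$, the kernel smoothness supplying the decay $\omega(2^{-k})$, and the bad-cube contributions being eliminated by the averaging in the manner of \cite{NTV:Tb}. The step that \emph{cannot be skipped} here --- and is the novelty relative to the finitary treatments of \cite{GH:2018,Volberg:15,HNVW3} --- is to establish the \emph{measurability}, in the parameter coding the dyadic family, of the maps $\mathscr D\mapsto\pair{\mathfrak H_{\mathscr D}f}{g}$, $\mathscr D\mapsto\pair{\Pi_{b_i,\mathscr D}f}{g}$, $\mathscr D\mapsto\pair{A^{(\gamma,k)}_{\mathscr D}f}{g}$, without which the expectations in \eqref{eq:DRTmain} have no meaning; this occupies \S\ref{sec:meas} and is exactly where hypothesis \eqref{it:T1measmeas} is used. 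Alongside one invokes the uniform-in-$\mathscr D$ bounds --- essentially classical, and quoted from \cite[Chapter~12]{HNVW3} --- $\Norm{\mathfrak H_{\mathscr D}}{\bddlin(L^p)}+\sum_{i}\Norm{\Pi_{b_i,\mathscr D}}{\bddlin(L^p)}\lesssim 1$ and $\Norm{A^{(\gamma,k)}_{\mathscr D}}{\bddlin(L^p)}\lesssim(1+k)^{1-1/p^*}$, whence $\sum_{k\geq2}\omega(2^{-k})\Norm{A^{(\gamma,k)}_{\mathscr D}}{\bddlin(L^p)}\lesssim\Norm{\omega}{\operatorname{Dini}^{1-1/p^*}}<\infty$; this gives the absolute and $\mathscr D$-uniform convergence of the $k$-series asserted in the theorem and, via dominated convergence, permits the interchange of $\lim_N$, $\E$, and $\sum_k$.

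\emph{Assembly and extension to $L^p$.} Combining the previous steps proves \eqref{eq:DRTmain} for all $f,g\in\mathscr I$. By the uniform bounds just quoted, the right-hand side of \eqref{eq:DRTmain} is a bounded bilinear form on $L^p(\R^d)\times L^{p'}(\R^d)$; it therefore represents a bounded operator $T\in\bddlin(L^p(\R^d))$, and, as it agrees with $\tau$ on $\mathscr I\times\mathscr I$, this yields \eqref{it:T1bd}. Finally, both sides of \eqref{eq:DRTmain} are continuous on $L^p(\R^d)\times L^{p'}(\R^d)$ while $\mathscr I$ is dense in each factor, so the identity persists for all $f\in L^p(\R^d)$ and $g\in L^{p'}(\R^d)$. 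The two genuinely hard points are: (i) the measurability analysis, which forces one to trace how \eqref{it:T1measmeas} propagates through the whole construction; and (ii) the finitary error analysis of \S\S\ref{sec:BCR}--\ref{sec:error}, together with the justification of exchanging the limit in $N$ with $\E$ and the $k$-sum --- all to be done from the weak hypotheses on $\tau$, without an a priori $L^p$ bound. By contrast, the size estimates for the individual model operators are routine and are simply cited.
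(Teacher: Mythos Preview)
Your overall architecture matches the paper's: the cyclic implications, the soft direction via translations and H\"older, the BCR-with-error decomposition, the vanishing of the error via the semi-weak boundedness property, the measurability analysis invoking \eqref{it:T1measmeas}, the shift bounds quoted from \cite{HNVW3}, and the final density argument. Your remark that the error analysis must be carried out from the weak hypotheses alone, with \eqref{it:T1swbp} entering exactly for cubes straddling $\partial R$, is also accurate.

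However, your description of the off-diagonal part is wrong. You write that it is ``indexed by pairs $(Q,P)$ of $\mathscr D$-cubes with $P$ strictly nested in $Q$'' and that ``a pair $(Q,P)$ with $\ell(P)=2^{-k}\ell(Q)$ feeds a generalised shift of complexity $k$''. That is the picture of the older \cite{Hytonen:A2}-style decomposition, and it does \emph{not} arise from the BCR algorithm. After BCR, the main term consists of $\tau(D_i f, D_i g) + \tau(D_i f, E_i g) + \tau(E_i f, D_i g)$, all living at a \emph{single} scale $i$; expanding over cubes, the off-diagonal part (Lemma~\ref{lem:mainterm}) is a sum over pairs $P,Q\in\mathscr D_i$ of the \emph{same} size with $P\neq Q$, organised by their \emph{spatial separation} $\abs{z_P-z_Q}/\ell(P)\in(2^{k-3},2^{k-2}]$. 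The complexity parameter $k$ measures horizontal distance, not scale ratio; it is only after the good-cube averaging (Lemma~\ref{lem:shifts}) that such a pair is seen to share a common $k$th ancestor $S$, which then indexes the shift $\mathfrak a_S^{(\gamma,k)}$. This distinction is not cosmetic: the nested-cube decomposition you describe yields shifts with bi-parameter complexity $(m,n)\in\N^2$ and typically requires H\"older rather than Dini regularity (see the discussion in the Introduction), so it would not deliver the single-$k$ series and the $\operatorname{Dini}^{1-1/p^*}$ condition of the theorem as stated.

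Two minor points. First, the paper applies the BCR identity directly to $\tau(f,g)$ (Lemma~\ref{lem:BCR}), with an explicit error $\mathscr E_{a,b}(f,g)$, rather than to $\tau(E_N^{\mathscr D}f, E_N^{\mathscr D}g)$ as you propose; these are equivalent in view of \eqref{eq:errorAltForm}, but the paper's formulation saves a separate argument for $\tau(E_bf,E_bg)\to\tau(f,g)$. Second, the Haar-multiplier coefficients $\tau(h_P^\alpha,h_P^\beta)$ are bounded using only the ordinary weak boundedness \eqref{it:T1wbp} (for coinciding children) and the kernel size bound (for disjoint children); the semi-weak boundedness \eqref{it:T1swbp} is used \emph{only} qualitatively, in the error-term analysis of \S\ref{sec:error}, and does not enter the quantitative bound on $\Norm{T}{\bddlin(L^p)}$.
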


\begin{remark}\label{rem:p2}
For $p=2$, the assumption on $\omega$ becomes $\Norm{\omega}{\operatorname{Dini}^{\frac12}}<\infty$, which seems to be the weakest known regularity assumption to run any proof of the $T(1)$ theorem. Of course, as soon as we know that $T\in\bddlin(L^2(\R^d))$, we also obtain $T\in\bddlin(L^p(\R^d))$ for all $p\in(1,\infty)$ by classical Calder\'on--Zygmund theory, and the stronger assumption $\Norm{\omega}{\operatorname{Dini}^{1-\frac{1}{p^*}}}<\infty$ is not needed for this conclusion about the boundedness of the operator alone. The point of this stronger assumption is to guarantee the validity of the dyadic representation formula \eqref{eq:DRTmain} for all $(f,g)\in L^p(\R^d)\times L^{p'}(\R^d)$ with arbitrary $p\in(1,\infty)$. This might be useful to know, given that any condition of the type $\Norm{\omega}{\operatorname{Dini}^s}<\infty$ is satisfies by the usual power type moduli of continuity $\omega(t)=t^\delta$, which are available in most applications.
\end{remark}

\begin{remark}\label{rem:classicalT1}
In the original $T(1)$ theorem \cite{DJ:T1}, the boundedness \eqref{it:T1bd} in Theorem \ref{thm:T1} is characterised just by $T(1)$ conditions like \eqref{it:T1T1} and a weak boundedness property like \eqref{it:T1wbp}; while the original formulation in terms of smooth test functions is slightly different, dyadic versions involving conditions just as written in  \eqref{it:T1T1} and \eqref{it:T1wbp} can be found e.g. in \cite{BCR,Figiel:90}; see also the recent account in \cite[Chapter 12]{HNVW3}. Thus conditions \eqref{it:T1swbp} and \eqref{it:T1contcont} or \eqref{it:T1measmeas} would seem to be redundant. However, there is a tricky detail here. Under assumptions \eqref{it:T1T1} and \eqref{it:T1wbp}, one can extend a bilinear form from the smaller test function space $\mathscr I_{\mathscr D}:=\lspan\{1_Q:Q\in\mathscr D\}\subsetneq\mathscr I$ to $L^p(\R^d)$. But, if $\tau$ was defined on $\mathscr I$ to begin with, we then have two competing definitions of the bilinear form on $\mathscr I\subseteq L^p(\R^d)$: the original one, and the one obtained by first restricting to $\mathscr I_{\mathscr D}$ and then extending to $L^p(\R^d)$. To check compatibility, some additional continuity of the original $\tau$ on the $\mathscr I$ would seem to be necessary, and one is very soon back to asking conditions like \eqref{it:T1swbp} and \eqref{it:T1contcont}.

Moreover, while assumptions weaker than \eqref{it:T1cont} or \eqref{it:T1meas} allow one to write down representation formulas {\em very much like} \eqref{eq:DRTmain}, and quite sufficient to extend $\tau$ from some test function space to $L^p(\R^d)$ (see \cite[Chapter 12]{HNVW3}), in order to make sense of \eqref{eq:DRTmain} {\em exactly as written} (in particular, averaging directly over a sum of complete shift operators, rather than taking a limit of averages over truncated shifts), the stated assumptions would seem to be close to minimal. In particular, the measurability \eqref{it:T1measmeas} is basically what is needed to make the expression inside the expectation $\E$ in \eqref{eq:DRTmain} measurable on the probability space over which the expectation is taken.
\end{remark}

\begin{remark}\label{rem:swbp}
It is evident that the weak boundedness property follows from the semi-weak boundedness property with $\Norm{\tau}{wbp}\leq\Norm{\tau}{swbp}$. The reason why we have formulated these conditions separately is that, of the two constants, only $\Norm{\tau}{wbp}$ enters quantitatively into the estimate of $\Norm{T}{\bddlin(L^p(\R^d))}$, while the finiteness of $\Norm{\tau}{swbp}$ is only used qualitatively to guarantee a certain convergence.

It is equally evident that both these conditions are necessary for \eqref{it:T1bd}; indeed,
\begin{equation*}
  \abs{\tau(1_R,1_S)}
  =\abs{\pair{T1_R}{1_S}}
  \leq\Norm{T}{\bddlin(L^p(\R^d))}\Norm{1_R}{p}\Norm{1_S}{p'}
  \leq\Norm{T}{\bddlin(L^p(\R^d))}\abs{Q}^{\frac1p+\frac{1}{p'}}.
\end{equation*}

The necessity of \eqref{it:T1contcont} follows from the continuity of translations on $L^p(\R^d)$ and $L^{p'}(\R^d)$: thus $z\mapsto 1_{Q+z}=1_Q(\cdot-z)$ is continuous from $\R^d$ to either $L^p(\R^d)$ or $L^{p'}(\R^d)$, and thus $z\mapsto\tau(1_{Q+z},1_{Q+z})=\pair{T1_{Q+z}}{1_{Q+z}}$ is continuous by the continuity of the operator $T$ and of the duality between $L^p(\R^d)$ and $L^{p'}(\R^d)$.

The necessity of \eqref{it:T1T1} is well-known from the classical theory; cf. \cite{DJ:T1}.

Since measurability is weaker than continuity, it is clear that \eqref{it:T1meas} follows from \eqref{it:T1cont}. The core of the theorem then consists of proving that \eqref{it:T1meas} implies \eqref{it:T1bd}. The representation formula \eqref{eq:DRTmain} will be established as a byproduct of this proof.
\end{remark}

\section{The BCR algorithm with an error term}\label{sec:BCR}

We will now perform a variant of the decomposition of a bilinear form $\tau(f,g)$, commonly known as the BCR algorithm after \cite{BCR}, although it would actually seem to go back to \cite{Figiel:90}.

The starting point is a sequence of ``approximate identities'' $(E_i)_{i\in\Z}$. Later on, we will be concerned with the case that
\begin{equation*}
  E_i f=\sum_{Q\in\mathscr D_i}E_Q f,\quad E_Q f:=1_Q\fint_Q f,
\end{equation*}
is the conditional expectation with respect to the dyadic cubes $\mathscr D_i$ of side-length $2^{-i}$, but the initial algebra is valid for any sequence of operators. We also denote
\begin{equation*}
  D_i:=E_{i+1}-E_i,\quad F_i:=I-E_i.
\end{equation*}

\begin{lemma}\label{lem:BCR}
With the notation just introduced, for all $a<b$, we have the formula
\begin{equation}\label{eq:BCRwithError}
  \tau(f,g) = \tau_{a,b}(f,g)+\mathscr E_{a,b}(f,g),
\end{equation}
with the main term
\begin{equation*}
  \tau_{a,b}(f,g):=\sum_{i=a}^{b-1}\Big(\tau(D_i f,D_i g) +\tau(D_i f,E_i g)  +\tau(E_i f,D_i g)\Big)
\end{equation*}
and the error term
\begin{equation*}
   \mathscr E_{a,b}(f,g)=\tau(E_a f,E_a g)+\tau(F_b f,g)+\tau(E_b f,F_b g).
\end{equation*}
\end{lemma}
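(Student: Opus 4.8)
The plan is to prove the identity \eqref{eq:BCRwithError} purely as an algebraic manipulation, using only the bilinearity of $\tau$ and the defining relations $D_i = E_{i+1}-E_i$ and $F_i = I - E_i$; no analytic input (continuity, kernel bounds, boundedness) is needed since every argument of $\tau$ on both sides lies in $\mathscr I$ once we take $f,g\in\mathscr I$ and $E_i$ to be conditional expectations onto $\mathscr D_i$ (which map $\mathscr I$ into $\mathscr I$). The strategy is a finite telescoping argument. First I would write, for each $i$, the ``block'' identity obtained by expanding $E_{i+1} = E_i + D_i$ in both slots:
\begin{equation*}
  \tau(E_{i+1}f,E_{i+1}g) = \tau(E_i f,E_i g) + \tau(D_i f,D_i g) + \tau(D_i f,E_i g) + \tau(E_i f,D_i g).
\end{equation*}
This is immediate from bilinearity. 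Rearranging gives
\begin{equation*}
  \tau(E_{i+1}f,E_{i+1}g) - \tau(E_i f,E_i g) = \tau(D_i f,D_i g) + \tau(D_i f,E_i g) + \tau(E_i f,D_i g),
\end{equation*}
which is precisely the $i$-th summand of $\tau_{a,b}$.

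Next I would sum this over $i = a,\dots,b-1$ and telescope the left-hand side, obtaining
\begin{equation*}
  \tau_{a,b}(f,g) = \tau(E_b f,E_b g) - \tau(E_a f,E_a g).
\end{equation*}
It then remains to check that
\begin{equation*}
  \tau(f,g) - \bigl(\tau(E_b f,E_b g) - \tau(E_a f,E_a g)\bigr) = \tau(E_a f,E_a g) + \tau(F_b f,g) + \tau(E_b f,F_b g),
\end{equation*}
i.e. $\tau(f,g) = \tau(E_b f, E_b g) + \tau(F_b f, g) + \tau(E_b f, F_b g)$. This last identity again follows from bilinearity by writing $f = E_b f + F_b f$ in the first slot and $g = E_b g + F_b g$ in the second: $\tau(f,g) = \tau(E_b f, E_b g) + \tau(E_b f, F_b g) + \tau(F_b f, E_b g) + \tau(F_b f, F_b g)$, and then recombining the last two terms as $\tau(F_b f, E_b g) + \tau(F_b f, F_b g) = \tau(F_b f, E_b g + F_b g) = \tau(F_b f, g)$.

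Putting the two displays together yields \eqref{eq:BCRwithError} with the stated main and error terms. The only point requiring a word of care — and the closest thing to an ``obstacle,'' though it is minor — is making sure that all the intermediate expressions are legitimate inputs to $\tau$: one must note that $E_i$, $D_i$, $F_i$ applied to an element of $\mathscr I$ again produce an element of $\mathscr I$ (for $F_i f = f - E_i f$ this uses that $\mathscr I$ is a vector space), so that every occurrence of $\tau(\cdot,\cdot)$ above is well-defined on $\mathscr I\times\mathscr I$. For the conditional expectation $E_i$ this holds because $E_Q 1_R = 1_Q \frac{\abs{R\cap Q}}{\abs Q}$ is a scalar multiple of $1_Q \in \mathscr I$, so $E_i 1_R \in \mathscr I$ and hence $E_i \mathscr I \subseteq \mathscr I$ by linearity. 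With this remark in place the proof is complete; no convergence issue arises because $a$ and $b$ are finite and the sum is finite.
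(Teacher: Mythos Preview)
Your proof is correct and is in fact cleaner than the paper's. The paper proceeds by substituting the full decomposition $I=E_a+\sum_{i=a}^{b-1}D_i+F_b$ into both slots of $\tau(f,g)$, producing nine terms; it then splits the resulting double sum $\sum_i\sum_j\tau(D_if,D_jg)$ according to $j=i$, $j<i$, $j>i$, collapses the off-diagonal parts via $\sum_{j<i}D_j=E_i-E_a$, and tracks the cancellations among the remaining cross terms. You bypass all of this by observing the one-line identity $\tau(E_{i+1}f,E_{i+1}g)-\tau(E_if,E_ig)=\tau(D_if,D_ig)+\tau(D_if,E_ig)+\tau(E_if,D_ig)$ and telescoping it, which yields $\tau_{a,b}(f,g)=\tau(E_bf,E_bg)-\tau(E_af,E_ag)$ directly; the error term then drops out from the single splitting $f=E_bf+F_bf$, $g=E_bg+F_bg$. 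Both arguments are purely algebraic and use nothing beyond bilinearity; yours is simply shorter. Your remark that $E_i$ (hence $D_i$, $F_i$) preserves $\mathscr I$ is the right well-definedness check, and your justification via $E_Q1_R=\frac{|Q\cap R|}{|Q|}1_Q$ is exactly what is needed.
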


\begin{proof}
It is immediate from the definitions that
\begin{equation}\label{eq:EDF}
  I=E_b+F_b=E_a+\sum_{i=a}^{b-1}(E_{i+1}-E_i)+F_b=E_a+\sum_{i=a}^{b-1}D_i+F_b.
\end{equation}
We will then apply \eqref{eq:EDF} to both arguments $f$ and $g$ of the bilinear form $\tau(f,g)$. 
This leads to the expansion
\begin{equation}\label{eq:9terms}
\begin{split}
  \tau(f,g) &= \tau\Big(E_a f+\sum_{i=a}^{b-1} D_i f+F_b f,E_a g+\sum_{j=a}^{b-1} D_j g+ F_b g\Big) \\
  &=\tau(E_a f,E_a g)+\tau(E_a f,\sum_{j=a}^{b-1} D_j g)+\tau(E_a f,F_b g) \\
  &\qquad+\tau(\sum_{i=a}^{b-1} D_i f,E_a g)+\tau(\sum_{i=a}^{b-1} D_i f,\sum_{j=a}^{b-1} D_j g)+\tau(\sum_{i=a}^{b-1} D_i f,F_b g) \\
  &\qquad+\tau(F_b f,E_a g)+\tau(F_b f,\sum_{j=a}^{b-1} D_j g)+\tau(F_b f,F_b g).
\end{split}
\end{equation}
The middle term involving the double sum can be regrouped according to whether $j=i$, $j<i$, or $j>i$ as follows:
\begin{equation}\label{eq:5terms}
\begin{split}
  &\tau(\sum_{i=a}^{b-1} D_i f,\sum_{j=a}^{b-1} D_j g) \\
  &=\sum_{i=a}^{b-1}\tau(D_i f,D_i g) +\sum_{i=a}^{b-1}\sum_{j=a}^{i-1}\tau(D_i f,D_j g)  +\sum_{j=a}^{b-1}\sum_{i=a}^{j-1}\tau(D_i f,D_j g) \\
  &=\sum_{i=a}^{b-1}\tau(D_i f,D_i g) +\sum_{i=a}^{b-1}\tau(D_i f,E_i g-E_a g)  +\sum_{j=a}^{b-1}\tau(E_j f-E_a f,D_j g) \\
  &=\sum_{i=a}^{b-1}\Big(\tau(D_i f,D_i g) +\tau(D_i f,E_i g)  +\sum_{j=a}^{b-1}\tau(E_i f,D_j g)\Big) \\
  &\qquad   -\sum_{i=a}^{b-1}\tau(D_i f,E_a g)-\sum_{i=a}^{b-1}\tau(E_a f,D_ig).
\end{split}
\end{equation}
The first three of \eqref{eq:5terms} agree with the first three terms \eqref{eq:BCRwithError}, and the two negative terms of \eqref{eq:5terms} cancel with two of the nine terms on the right of \eqref{eq:9terms}. 
Finally, observing that
\begin{equation*}
  \tau(F_b f,E_a g)+\tau(F_b f,\sum_{j=a}^{b-1} D_j g)+\tau(F_b f,F_b g)=\tau(F_b f,g),
\end{equation*}
and similarly with the roles of $f$ and $g$ interchanged, we see that these remaining terms of \eqref{eq:9terms} combine to give
\begin{equation*}
\begin{split}
  \tau(E_a f,E_a g) &+\tau(F_b f,g)+\tau(f,F_b g)-\tau(F_b f,F_b g) \\
  &=\tau(E_a f,E_a g)+\tau(F_b f,g)+\tau(E_b f,F_b g),
\end{split}
\end{equation*}
which is precisely the claimed error term in \eqref{eq:BCRwithError}.
\end{proof}

\section{The error term}\label{sec:error}

We next show that the error term in the BCR algorithm vanishes in the limit. The first part is standard, but we recall the short argument for completeness.

\begin{lemma}\label{lem:EafEag}
For $f,g\in L^\infty_c(\R^d)$, we have
\begin{equation*}
   \tau(E_a f,E_a g)\to 0\quad\text{as}\quad a\to-\infty.
\end{equation*}
\end{lemma}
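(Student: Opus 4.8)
The plan is to exploit the kernel representation \eqref{eq:kernelRep} together with the decay of the Calder\'on--Zygmund kernel. Fix $f,g\in L^\infty_c(\R^d)$ and let $R_0$ be a fixed (large) cube containing the supports of both $f$ and $g$. For $a$ very negative, each dyadic cube $Q\in\mathscr D_a$ meeting $R_0$ is enormous, so $E_a f=\sum_{Q\in\mathscr D_a}1_Q\fint_Q f$ is a finite linear combination of indicators $1_Q$ with $\abs{Q}$ large; hence $E_a f\in\mathscr I$ and likewise $E_a g\in\mathscr I$, so $\tau(E_a f,E_a g)$ makes sense. The key quantitative point is that $\Norm{E_a f}{\infty}\leq\Norm{f}{\infty}$ while $\supp(E_a f)\subseteq\bigcup\{Q\in\mathscr D_a:Q\cap R_0\neq\emptyset\}$, and similarly for $g$, but moreover $\int E_a f=\int f$ is fixed (independent of $a$).

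The main step is to estimate $\tau(E_a f,E_a g)$ using the kernel. First I would split $\tau(E_a f,E_a g)$ according to the dyadic cubes of $\mathscr D_a$ carrying the two arguments. Writing $E_a f=\sum_{P}c_P 1_P$ and $E_a g=\sum_S d_S 1_S$ over the finitely many relevant $P,S\in\mathscr D_a$, the diagonal terms (with $P=S$) can be controlled crudely: $\abs{\tau(1_P,1_P)}$ — but here we do not yet have the weak boundedness property at our disposal, so instead I would keep $a$ so negative that $R_0$ is contained in a single cube $P_0\in\mathscr D_a$, i.e. there is exactly one cube $P=S=P_0$ carrying both $E_a f$ and $E_a g$. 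Then $E_a f=(\fint_{P_0}f)1_{P_0}=\abs{P_0}^{-1}(\int f)1_{P_0}$ and similarly $E_a g=\abs{P_0}^{-1}(\int g)1_{P_0}$, so $\tau(E_a f,E_a g)=\abs{P_0}^{-2}(\int f)(\int g)\,\tau(1_{P_0},1_{P_0})$. If $\int f=0$ or $\int g=0$ this vanishes outright; in general I would instead pick two cubes $P_0^\pm\in\mathscr D_a$ adjacent to $P_0$ (on opposite sides) and write $1_{P_0}=\tfrac12(1_{P_0}+1_{P_0^+})+\tfrac12(1_{P_0}-1_{P_0^+})$ to reduce to disjoint-rectangle terms plus a term $\tau(1_{P_0}+1_{P_0^+},\cdot)$ whose first argument again has smaller relative overlap — but the cleaner route is:

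Use the \emph{disjointness trick}: enlarge $a\to-\infty$ and additionally translate/partition so that $E_a f$ and $E_a g$ are (up to the single common cube) supported on disjoint cubes. Concretely, pick $Q_a\in\mathscr D_a$ the unique cube containing $R_0$ and let $Q_a'\in\mathscr D_a$ be a neighbour of the same side length sharing a face. Then $1_{Q_a}=1_{Q_a\cup Q_a'}-1_{Q_a'}$, and $Q_a'$ is disjoint from $R_0\supseteq\supp f,\supp g$. Thus, writing $c:=\abs{Q_a}^{-1}\int f$ and $d:=\abs{Q_a}^{-1}\int g$,
\begin{equation*}
  \tau(E_a f,E_a g)=cd\,\tau(1_{Q_a},1_{Q_a})
  =cd\Big(\tau(1_{Q_a\cup Q_a'},1_{Q_a\cup Q_a'})-\tau(1_{Q_a\cup Q_a'},1_{Q_a'})-\tau(1_{Q_a'},1_{Q_a\cup Q_a'})+\tau(1_{Q_a'},1_{Q_a'})\Big),
\end{equation*}
and by a further such step one reduces everything to $\tau$ evaluated on pairs of disjoint rectangles plus a bounded number of ``big cube'' terms, all estimated via \eqref{eq:kernelRep} by $\iint_{R\times S}\abs{x-y}^{-d}\ud x\ud y\lesssim c_K\abs{Q_a}$ for $R,S\subseteq 3Q_a$ with $\abs{R},\abs{S}\lesssim\abs{Q_a}$; since $\abs{c},\abs{d}\lesssim\abs{Q_a}^{-1}$, the total is $O(\abs{Q_a}^{-1})\to 0$ as $a\to-\infty$. (When $\int f=0$ this is unnecessary since $\tau(E_af,E_ag)=0$ for $a$ small enough, and that degenerate observation already suggests the right normalisation.)

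The anticipated main obstacle is purely bookkeeping: making precise the ``finitely many relevant cubes'' statement and the reduction to disjoint rectangles without invoking boundedness or weak boundedness of $\tau$ (which are not available at this stage of the argument), i.e. squeezing all the needed information out of the kernel representation \eqref{eq:kernelRep} on disjoint rectangles alone together with $\Norm{f}{\infty},\Norm{g}{\infty}$ and the fixed mass $\int f,\int g$. Once the geometry is arranged so that every term is either a disjoint-rectangle term (controlled by the $\abs{x-y}^{-d}$ bound) or carries a coefficient of size $O(\abs{Q_a}^{-1})$ against a quantity of size $O(\abs{Q_a})$, the limit $a\to-\infty$ is immediate. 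I would present this as: reduce to a single common cube $Q_a\supseteq R_0$ with $\abs{Q_a}\to\infty$; peel off neighbours to convert the self-pairing into disjoint-rectangle pairings; bound each by $c_K\iint\abs{x-y}^{-d}\lesssim c_K\abs{Q_a}$; multiply by the coefficients $\lesssim\abs{Q_a}^{-1}\Norm{f}{\infty}\abs{R_0}$ etc.; conclude $\abs{\tau(E_af,E_ag)}\lesssim c_K\abs{R_0}^2\Norm{f}{\infty}\Norm{g}{\infty}\abs{Q_a}^{-1}\to 0$.
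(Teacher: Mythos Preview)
Your proposal has a genuine gap. You assert that ``we do not yet have the weak boundedness property at our disposal'', but this is a misreading of the logical structure: the lemma is proved under the standing hypotheses of Theorem~\ref{thm:T1}\eqref{it:T1meas}, which include the weak boundedness property \eqref{it:T1wbp}. The paper's proof simply invokes $\abs{\tau(1_P,1_P)}\leq\Norm{\tau}{wbp}\abs{P}$ for the diagonal terms and the kernel bound $\abs{\tau(1_P,1_Q)}\lesssim c_K\abs{P}$ for disjoint equal-size cubes, giving $\abs{\tau(E_af,E_ag)}\lesssim(\Norm{\tau}{wbp}+c_K)\Norm{f}{1}\Norm{g}{1}\,2^{da}\to 0$ in a few lines.

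Your attempted workaround via the ``disjointness trick'' does not work. The identity $1_{Q_a}=1_{Q_a\cup Q_a'}-1_{Q_a'}$ only trades one self-pairing for others: after expanding, you still have $\tau(1_{Q_a\cup Q_a'},1_{Q_a\cup Q_a'})$ and $\tau(1_{Q_a'},1_{Q_a'})$, and no finite iteration of this step eliminates the diagonal contributions. More fundamentally, the kernel representation \eqref{eq:kernelRep} is only assumed for \emph{disjoint} rectangles, and the would-be extension $\iint_{Q\times Q}\abs{x-y}^{-d}\ud x\ud y$ diverges, so your claimed bound ``$\lesssim c_K\abs{Q_a}$'' for the ``big cube'' terms is false. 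Without some form of weak boundedness, $\tau(1_Q,1_Q)$ is simply not controlled by the kernel data. A secondary issue: the reduction to a single cube $Q_a\supseteq R_0$ can fail for certain dyadic systems (e.g.\ when $R_0$ straddles a quadrant boundary in $\mathscr D^0$); the paper handles this by allowing up to $2^d$ cubes.
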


\begin{proof}
Suppose that $f,g\in L^\infty_c(\R^d)$. For $a$ negative enough (thus $2^{-a}$ large enough), the supports of $f$ and $g$ intersect at most $2^d$ cubes $Q\in\mathscr D_a$. Thus
\begin{equation*}
  \tau(E_a f,E_a g)
  =\sum_{P,Q\in\mathscr D_a}\ave{f}_P\tau(1_P,1_Q)\ave{g}_Q,
\end{equation*}
where the summation runs over the said $2^d$ cubes $P$ and $Q$ only.

The assumed weak boundedness property guarantees that $\abs{\tau(1_P,1_P)}\leq\Norm{\tau}{wbp}\abs{P}$ for each cube $P$. The similar estimate $\abs{\tau(1_P,1_Q)}\lesssim c_K\abs{P}$ for disjoint cubes of equal size follows from the standard kernel estimate $\abs{K(x,y)}\leq c_K\abs{x-y}^{-d}$ \cite[Lemma 12.4.2]{HNVW3}.
It is clear that $\abs{\ave{f}_P}\leq\abs{P}^{-1}\Norm{f}{\infty}$ and likewise for $g$. Hence
\begin{equation*}
\begin{split}
  \abs{\tau(E_a f,E_a g)}
  &\lesssim (\Norm{\tau}{wbp}+c_K)\Norm{f}{\infty}\Norm{g}{\infty}\abs{P}^{-1-1+1} \\
  &=(\Norm{\tau}{wbp}+c_K)\Norm{f}{\infty}\Norm{g}{\infty} 2^{da}\to 0
\end{split}
\end{equation*}
as $a\to-\infty$.
\end{proof}

The next estimate is slightly more exotic. It trivialises for $f\in\mathscr I_{\mathscr D}$, since then $F_b f=0$ for sufficiently large $b$. For $f,g\in\mathscr I$, it depends on the semi-weak boundedness property.

\begin{lemma}\label{lem:Fbfg}
For $f,g\in\mathscr I$, we have
\begin{equation*}
   \tau(F_b f,g)\to 0\quad\text{as}\quad b\to\infty.
\end{equation*}
\end{lemma}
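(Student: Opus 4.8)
The strategy is to reduce to the case of a single rectangle $f = 1_R$ and then estimate $\tau(F_b 1_R, g)$ via the kernel representation, using that $F_b 1_R = 1_R - E_b 1_R$ is supported near the boundary of $R$ and has mean zero on each dyadic cube $Q \in \mathscr D_b$ it meets. First I would use bilinearity to reduce to $f = 1_R$, $g = 1_S$ for rectangles $R, S$; by further splitting $S$ (and enlarging $R$) along the grid $\mathscr D_b$, and using that $F_b$ kills functions that are $\mathscr D_b$-measurable, one reduces to controlling two kinds of contributions: (i) a ``local'' part where we are near $\partial R$ and must invoke the semi-weak boundedness property \eqref{it:T1swbp}, and (ii) a ``far'' part where $R$ and $S$ are separated and the kernel representation \eqref{eq:kernelRep} applies.

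**Key steps.** (1) Write $F_b 1_R = 1_R - E_b 1_R = \sum_{Q \in \mathscr D_b} (1_{R\cap Q} - \ave{1_R}_Q 1_Q)$, where only the cubes $Q$ with $0 < |R\cap Q| < |Q|$ (i.e.\ those meeting $\partial R$) contribute a nonzero term $\phi_Q := 1_{R\cap Q} - \ave{1_R}_Q 1_Q$; note $\int \phi_Q = 0$ and $\supp \phi_Q \subseteq Q$. The number of such $Q$ of a given generation scales like $2^{(d-1)b}$ times the side-length of $R$, while each has measure $2^{-db}$; so the total measure of $\supp F_b 1_R$ is $O(2^{-b})$, which is the decay we will harvest. (2) For the pairing against a fixed $1_S$: split $S = \bigcup_{Q'} S \cap Q'$ over $Q' \in \mathscr D_b$. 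When $Q' = Q$ or $Q'$ is adjacent to $Q$, bound $|\tau(\phi_Q, 1_{S\cap Q'})|$ using the semi-weak boundedness property applied in a fixed common cube of side-length $\sim 2^{-b}$: since $R\cap Q$ and $S\cap Q'$ are rectangles inside such a cube, $|\tau(1_{R\cap Q}, 1_{S\cap Q'})| \lesssim \Norm{\tau}{swbp} 2^{-db}$, and similarly for the $\ave{1_R}_Q 1_Q$ piece; summing over the $O(2^{(d-1)b}\ell(R)^{d-1}\cdot\text{const})$ relevant pairs gives $O(2^{-b})$. (3) When $Q'$ is not adjacent to $Q$, the rectangles $R\cap Q$ and $S\cap Q'$ are disjoint, so $\tau(\phi_Q, 1_{S\cap Q'}) = \iint K(x,y)\phi_Q(y) 1_{S\cap Q'}(x)\ud x\ud y$; using $\int\phi_Q = 0$ we subtract $K(x, z_Q)$ to get the standard gain $|\phi_Q|$ applied against the Hölder-type difference $\omega(\ell(Q)/|x - z_Q|)|x - z_Q|^{-d}$, and summing the resulting geometric/Dini series over the annuli of $Q'$ around $Q$ and then over all boundary cubes $Q$ again produces $O(2^{-b}\cdot\Norm{\omega}{\operatorname{Dini}})$, which tends to $0$.

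**Main obstacle.** The delicate point is step (2): the mean-zero cancellation of $\phi_Q$ is \emph{not} available against the near neighbours $Q'$ (one cannot subtract a single kernel value there because the singularity is inside or on the boundary of the domain of integration), so one genuinely needs a boundedness hypothesis on $\tau$ for adjacent rectangles of comparable size — precisely the role of \eqref{it:T1swbp}, which (as Remark \ref{rem:swbp} signals) is used only qualitatively here. One must be slightly careful that $R\cap Q$ and $S\cap Q'$, while sitting in a common cube of side $\sim 2^{-b}$, are arbitrary sub-rectangles of it, so the semi-weak boundedness property must be invoked in exactly the stated form (rectangles inside a cube), and the bookkeeping of how many boundary cubes $Q$ there are — linear in $2^{(d-1)b}$ for fixed $R$ — has to be matched against the per-term bound $\lesssim 2^{-db}$ to produce the decaying factor $2^{-b}$. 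Once these two ingredients are in place, $\tau(F_b 1_R, 1_S) = O(2^{-b}) \to 0$, and bilinearity finishes the general case $f, g \in \mathscr I$.
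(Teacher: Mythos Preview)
Your proposal is correct and follows essentially the same approach as the paper's proof: localize $F_b 1_R$ to cubes $Q\in\mathscr D_b$ meeting $\partial R$, split the pairing against $g$ into near and far parts, use the semi-weak boundedness property for the near part and the mean-zero cancellation of $\phi_Q$ together with the Dini kernel bound for the far part, and count the $O(2^{(d-1)b})$ boundary cubes against the per-term bound $O(2^{-db})$ to get $O(2^{-b})$. The only minor differences are that the paper handles the far part for general $f\in L^p$, $g\in L^{p'}$ via dominated convergence with the maximal function (rather than reducing to indicators first), and for the adjacent-but-distinct cubes $Q'\subset 3Q\setminus Q$ it uses the kernel size bound $c_K$ directly rather than invoking swbp.
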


\begin{proof}
We note that
\begin{equation*}
  F_b f=\sum_{Q\in\mathscr D_b}F_Q f,\quad F_Q f:=1_Q(f-\ave{f}_Q).
\end{equation*}
Then
\begin{equation*}
  \tau(F_b f,g)
  =\sum_{Q\in\mathscr D_b}\tau(F_Q f,g)
  =\sum_{Q\in\mathscr D_b}\tau(F_Q f,1_{3Q}g)+\sum_{Q\in\mathscr D_b}\tau(F_Q f,1_{(3Q)^c}g).
\end{equation*}
We first consider the second sum. We use the cancellation in $F_Q f$ to write
\begin{equation*}
\begin{split}
  \tau(F_Q f,1_{(3Q)^c}g)
  &=\iint K(x,y)F_Q f(y)1_{(3Q)^c}(x)g(x)\ud x\ud y \\
  &=\iint [K(x,y)-K(x,z_Q)]F_Q f(y)1_{(3Q)^c}(x)g(x)\ud x\ud y \\
\end{split}
\end{equation*}
and then kernel bounds to estimate
\begin{equation*}
\begin{split}
  \abs{\tau(F_Q f,1_{(3Q)^c}g)}
  &\lesssim\iint \omega(\frac{\frac12\ell(Q)}{\abs{x-z_Q}})\frac{1}{\abs{x-z_Q}^d}\abs{F_Qf(y)}1_{(3Q)^c}(x)\abs{g(x)}\ud x\ud y \\
  &\lesssim\Norm{F_Q f}{1}\Norm{\omega}{\operatorname{Dini}}\inf_{z\in Q} Mg(z)
  \leq\Norm{\omega}{\operatorname{Dini}}\int_Q\abs{F_b f(x)}Mg(x)\ud x,
\end{split}
\end{equation*}
where
\begin{equation*}
  \Norm{\omega}{\operatorname{Dini}}:=\int_0^1\omega(u)\frac{\ud u}{u}.
\end{equation*}
Summing over $Q\in\mathscr D_b$, we find that
\begin{equation}\label{eq:sumF}
   \sum_{Q\in\mathscr D_b}\abs{\tau(F_Q f,1_{(3Q)^c}g)}
   \lesssim\Norm{\omega}{\operatorname{Dini}}\int_{\R^d}\abs{F_b f(x)}Mg(x)\ud x.
\end{equation}
Here
\begin{equation*}
\begin{split}
  \abs{F_b f(x)} &=\abs{f(x)-\ave{f}_Q},\qquad x\in Q\in\mathscr D_b, \\ 
  &\to 0\qquad\text{as }b\to\infty
\end{split}
\end{equation*}
at almost every $x\in\R^d$ by Lebesgue's differentiation (or martingale convergence) theorem. On the other hand, we have $F_b f(x)\leq\abs{f(x)}+Mf(x)\leq 2Mf(x)$, and $Mf\cdot Mg\in L^1(\R^d)$ for $f\in L^p(\R^d)$ and $g\in L^{p'}(\R^d)$. Thus
\begin{equation*}
  \int_{\R^d}\abs{F_b f(x)}Mg(x)\ud x\to 0\quad\text{as }b\to\infty
\end{equation*}
by dominated convergence.

We then turn to the local part
\begin{equation*}
  \sum_{Q\in\mathscr D_b}\tau(F_Q f,1_{3Q}g),\quad
  \tau(F_Q f,1_{3Q}g)
  =\tau(F_Q f,1_{Q}g)+\tau(F_Q f,1_{3Q\setminus Q}g).
\end{equation*}
We will now make use of the stronger assumption that $f$ and $g$ are linear combinations of indicators of rectangles. By linearity, we may assume without loss of generality that $f=1_R$ and $g=1_S$ for some rectangles $R$ and $S$. We note that
\begin{equation}\label{eq:FQ1R}
  F_Q f=F_Q 1_R=1_{Q\cap R}-\frac{\abs{Q\cap R}}{\abs{Q}}1_Q
\end{equation}
is non-zero only if $Q$ intersects both $R$ and $R^c$, and thus $\partial R$.

Now
\begin{equation*}
  \tau(F_Q f,1_{3Q\setminus Q}g)=\sum_{\substack{Q'\in\mathscr D_b \\ Q'\subseteq 3Q\setminus Q}}\tau(F_Q f,1_{Q'}g),
\end{equation*}
and it follows from the kernel bound $\abs{K(x,y)}\leq c_K\abs{x-y}^{-d}$ that
\begin{equation}\label{eq:semilocal}
  \abs{\tau(F_Q f,1_{Q'}g)}\leq c_K\Norm{F_Q f}{\infty}\Norm{g}{\infty}\iint_{Q\times Q'}\frac{\ud x\ud y}{\abs{x-y}^d}\lesssim c_K\abs{Q}.
\end{equation}
Thus
\begin{equation}\label{eq:bdryComp}
\begin{split}
  \sum_{Q\in\mathscr D_b}\abs{\tau(F_Q f,1_{3Q\setminus Q}g)}
  &\lesssim c_K\sum_{\substack{ Q\in\mathscr D_b \\ Q\cap(\partial R)\neq\varnothing}}\abs{Q}  \\
  & \leq c_K\abs{\{y\in\R^d:\dist(y,\partial R)<2^{-b}\}} \\
  &\lesssim c_K 2^{-b}\mathcal H^{d-1}(\partial R)\to 0\quad\text{as }b\to\infty.
\end{split}
\end{equation}

Finally, referring to \eqref{eq:FQ1R},
\begin{equation*}
  \tau(F_Q f,1_Q g)
  =\tau(1_{Q\cap R},1_{Q\cap S})-\frac{\abs{Q\cap R}}{\abs{Q}}\tau(1_Q,1_{Q\cap S}).
\end{equation*}
Here we make use of the semi-weak boundedness property: Since $Q\cap R$, $Q\cap S$, and $Q$ itself are all rectangles contained in $Q$, we have
\begin{equation}\label{eq:verylocal}
  \abs{\tau(1_{Q\cap R},1_{Q\cap S})}\leq\Norm{\tau}{swbp}\abs{Q},\quad\abs{\tau(1_{Q},1_{Q\cap S})}\leq\Norm{\tau}{swbp}\abs{Q},
\end{equation}
and hence
\begin{equation*}
  \abs{\tau(F_Q f,1_Q g)}\lesssim \Norm{\tau}{swbp}\abs{Q}.
\end{equation*}
Summing over $Q$, it follows as in \eqref{eq:bdryComp} that
\begin{equation*}
\begin{split}
    \sum_{Q\in\mathscr D_b}\abs{\tau(F_Q f,1_{Q}g)}
  &\lesssim\Norm{\tau}{swbp}\sum_{\substack{ Q\in\mathscr D_b \\ Q\cap(\partial R)\neq\varnothing}}\abs{Q} \\
  &\lesssim \Norm{\tau}{swbp}2^{-b}\mathcal H^{d-1}(\partial R)\to 0\quad\text{as }b\to\infty.
\end{split}
\end{equation*}
\end{proof}

\begin{lemma}\label{lem:EbfFbg}
For $f,g\in\mathscr I$, we have
\begin{equation*}
  \tau(E_b f,F_b g)\to 0\quad\text{as}\quad b\to\infty.
\end{equation*}
\end{lemma}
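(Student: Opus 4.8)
The statement to prove is Lemma \ref{lem:EbfFbg}: for $f,g\in\mathscr I$, $\tau(E_b f,F_b g)\to 0$ as $b\to\infty$. The natural approach is to mimic the proof of Lemma \ref{lem:Fbfg} as closely as possible, exploiting the symmetry of the kernel hypotheses in the two variables. By bilinearity we reduce to $f=1_R$, $g=1_S$ for rectangles $R,S$. Write $F_b g=\sum_{Q\in\mathscr D_b}F_Q g$ with $F_Q g=1_Q(g-\ave{g}_Q)$, which is nonzero only when $Q$ meets both $S$ and $S^c$, hence $\partial S$. So the sum is effectively over the $O(2^{(d-1)b})$ cubes of $\mathscr D_b$ touching $\partial S$.

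\textbf{Key steps.} First I would split $\tau(E_b f,F_Q g)=\tau(1_{3Q}E_b f,F_Q g)+\tau(1_{(3Q)^c}E_b f,F_Q g)$. For the far part, use the cancellation $\int F_Q g=0$ to subtract $K(x,z_Q)$ in the $x$-variable (now the roles of the arguments are swapped relative to Lemma \ref{lem:Fbfg}, so the relevant kernel smoothness is the second term $\abs{K(y,x)-K(y,x')}$ in Definition \ref{def:CZK}), giving
\begin{equation*}
  \abs{\tau(1_{(3Q)^c}E_b f,F_Q g)}\lesssim\Norm{\omega}{\operatorname{Dini}}\int_Q\abs{F_b g(y)}\,ME_b f(y)\ud y,
\end{equation*}
and note $ME_b f\le Mf$ while $\abs{F_b g}\le 2Mg$ pointwise and $\abs{F_b g}\to0$ a.e.; dominated convergence against $Mf\cdot Mg\in L^1$ finishes this piece. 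For the near part, split $1_{3Q}E_b f=1_{3Q\setminus Q}E_b f+1_Q E_b f$. On $3Q\setminus Q$ the single-scale kernel bound $\abs{K(x,y)}\le c_K\abs{x-y}^{-d}$ plus $\Norm{E_b f}{\infty}\le\Norm{f}{\infty}$ gives $\abs{\tau(1_{Q'}E_bf,F_Qg)}\lesssim c_K\abs{Q}$ for the $O(1)$ neighbours $Q'$, and summing over the $O(2^{(d-1)b})$ relevant $Q$ yields a bound $\lesssim c_K\,2^{-b}\mathcal H^{d-1}(\partial S)\to0$. On $Q$ itself, write $1_Q E_b f=\ave{f}_Q\,1_Q=\frac{\abs{Q\cap R}}{\abs{Q}}1_Q$ and $F_Q g=1_{Q\cap S}-\frac{\abs{Q\cap S}}{\abs{Q}}1_Q$, so $\tau(1_QE_bf,F_Qg)$ is a combination of $\tau(1_Q,1_{Q\cap S})$ and $\tau(1_Q,1_Q)$; the semi-weak boundedness property (and weak boundedness property) bound both by $\lesssim\Norm{\tau}{swbp}\abs{Q}$, and again summing over the $O(2^{(d-1)b})$ boundary cubes gives $\lesssim\Norm{\tau}{swbp}\,2^{-b}\mathcal H^{d-1}(\partial S)\to0$.

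\textbf{Main obstacle.} Unlike in Lemma \ref{lem:Fbfg}, here $E_b f$ is not literally an indicator of a rectangle but a step function $\sum_Q\ave{f}_Q1_Q$, so one has to be slightly careful that the reductions are compatible: the point is that on each fixed $Q\in\mathscr D_b$ entering the local term, $E_b f$ restricted to $Q$ is the \emph{constant} $\ave{f}_Q$, so $1_Q E_b f=\ave{f}_Q1_Q$ is a scalar multiple of $1_Q$ and $\abs{\ave{f}_Q}\le\Norm{f}{\infty}$; and the semi-weak boundedness property applies because $Q$ and $Q\cap S$ are rectangles inside $Q$. The mild subtlety in the far part is that $ME_b f$ must be controlled uniformly in $b$ — but $\abs{E_b f}\le Mf$ pointwise, hence $M(E_b f)\le M(Mf)\lesssim Mf$ is not quite what one wants quantitatively; cleaner is to note $E_b f(x)=\ave{f}_Q$ for $x\in Q$ so $\inf_{z\in Q}M(E_bf)(z)\lesssim\inf_{z\in Q}Mf(z)$ directly, exactly as in the far-part estimate of Lemma \ref{lem:Fbfg} with $f,g$ interchanged, so nothing new is needed beyond bookkeeping. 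I expect no genuinely hard point here; the proof is a symmetric mirror of Lemma \ref{lem:Fbfg} together with the observation that conditional expectation does not inflate the relevant maximal-function or $L^\infty$ bounds.
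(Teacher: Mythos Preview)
Your proposal is correct and follows essentially the same approach as the paper. The paper first invokes the symmetry of the assumptions to swap the arguments and consider $\tau(F_b f,E_b g)$ instead, then literally re-runs Lemma \ref{lem:Fbfg} with $g$ replaced by $E_b g$; you instead keep the original order and run the mirror argument directly---these are the same proof up to relabelling. One small slip: with the paper's convention $\tau(f,g)=\iint K(x,y)f(y)g(x)$, the cancellation of $F_Q g$ is in the $x$-variable, so you subtract $K(z_Q,y)$ (not $K(x,z_Q)$) and invoke the \emph{first} term $\abs{K(x,y)-K(x',y)}$ of Definition \ref{def:CZK}, not the second; this is harmless since both are assumed.
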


\begin{proof}
Since the assumptions on $\tau$ are symmetric with respect to the two arguments, we may as well consider $\tau(F_b f, E_b g)$. This is like $\tau(F_b f,g)$ of Lemma \ref{lem:Fbfg} but with $E_b g$ in place of $g$, so it remains to explain the modification caused by this replacement.

In place of \eqref{eq:sumF}, we obtain
\begin{equation}\label{eq:sumF2}
   \sum_{Q\in\mathscr D_b}\abs{\tau(F_Q g, 1_{(3Q)^c}E_b g,)}
   \lesssim\Norm{\omega}{\operatorname{Dini}}\int_{\R^d}\abs{F_b f(x)}M(E_b g)(x)\ud x.
\end{equation}
But $\abs{E_b g}\leq Mf$, and $M(E_b f)\leq M(Mg)$ is still dominated in norm by the original $g$, so the convergence of this term is concluded, as before, by the pointwise convergence $F_b f(x)\to 0$ followed by dominated convergence. (Actually, a slightly more careful inspection of the considerations leading to \eqref{eq:sumF} shows that we can get \eqref{eq:sumF2} with simply $Mg$ in place of $M(E_b g)$.)

As for the semi-local part \eqref{eq:semilocal}, the replacement of $g$ by $E_b g$ leads to the same bound as $\Norm{E_b g}{\infty}\leq\Norm{g}{\infty}$.

In the very local estimate \eqref{eq:verylocal}, the replacement of $g=1_S$ by $E_b g$ leads to the replacement of $1_Q g=1_{Q\cap S}$ by $\frac{\abs{Q\cap S}}{\abs{Q}}1_Q$, and the analogues of \eqref{eq:verylocal} follow from the semi-weak boundedness property as before.
\end{proof}

\begin{corollary}\label{cor:BCRerrorToZero}
For $f,g\in\mathscr I$, we have
\begin{equation*}
  \mathscr E_{a,b}(f,g)\to 0\quad\text{as}\quad a\to-\infty,\quad b\to\infty.
\end{equation*}
\end{corollary}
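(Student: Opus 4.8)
The plan is simply to combine the three limit statements already established. By Lemma~\ref{lem:BCR}, the error term splits as
$\mathscr E_{a,b}(f,g)=\tau(E_a f,E_a g)+\tau(F_b f,g)+\tau(E_b f,F_b g)$,
and each summand is governed by exactly one of the preceding lemmas, so the corollary is obtained by adding up three convergences.

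First I would record the elementary observation that $\mathscr I\subseteq L^\infty_c(\R^d)$: any finite linear combination of indicators of (bounded) rectangles is a bounded function with compact support. Hence Lemma~\ref{lem:EafEag} applies to $f,g\in\mathscr I$ and gives $\tau(E_a f,E_a g)\to 0$ as $a\to-\infty$; note this term does not involve $b$ at all. Next, Lemma~\ref{lem:Fbfg} gives $\tau(F_b f,g)\to 0$ as $b\to\infty$, and Lemma~\ref{lem:EbfFbg} gives $\tau(E_b f,F_b g)\to 0$ as $b\to\infty$, neither of these involving $a$. Since the two parameters appear in separate (non-overlapping) groups of terms, summing yields $\mathscr E_{a,b}(f,g)\to 0$ as $a\to-\infty$ and $b\to\infty$, whether the limits are taken jointly or iteratively in either order.

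As the substantive work is entirely contained in Lemmas~\ref{lem:EafEag}, \ref{lem:Fbfg}, and~\ref{lem:EbfFbg}, there is no real obstacle left here; the only point that deserves to be spelled out is the inclusion $\mathscr I\subseteq L^\infty_c(\R^d)$ that licenses the use of the first of these lemmas (which was stated for $L^\infty_c(\R^d)$ rather than for $\mathscr I$).
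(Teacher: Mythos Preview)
Your proof is correct and follows the same approach as the paper, which simply invokes Lemmas~\ref{lem:EafEag}, \ref{lem:Fbfg}, and~\ref{lem:EbfFbg} for the three terms of $\mathscr E_{a,b}(f,g)$. Your explicit remark that $\mathscr I\subseteq L^\infty_c(\R^d)$, needed to apply Lemma~\ref{lem:EafEag}, and the observation that the parameters $a$ and $b$ decouple are useful clarifications that the paper leaves implicit.
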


\begin{proof}
The quantity $\mathscr E_{a,b}(f,g)$ consists of three terms. The convergence of each term to zero is handled in Lemmas \ref{lem:EafEag}, \ref{lem:Fbfg} and \ref{lem:EbfFbg}, respectively.
\end{proof}

\section{The main term}\label{sec:main}

Turning to the main term in Lemma \ref{lem:BCR}, we will now express it in the following form, where we denote $\mathscr D_{[a,b)}:=\bigcup_{i\in[a,b)}\mathscr D_i$.

\begin{lemma}\label{lem:mainterm}
\begin{equation*}
  \tau_{a,b}(f,g)
  =\tau_{a,b}^{\operatorname{diag}}(f,g)+\tau_{a,b}^{\operatorname{off}}(f,g)
\end{equation*}
with the diagonal part
\begin{equation*}
  \tau_{a,b}^{\operatorname{diag}}(f,g)
  :=\sum_{P\in\mathscr D_{[a,b)}}\Big(\tau(D_P f,D_P g)+\ave{f}_P\tau(1,D_P g)+\tau(D_P f,1)\ave{g}_P\Big)
\end{equation*}
and the off-diagonal part
\begin{equation*}
\begin{split}
  \tau_{a,b}^{\operatorname{off}}(f,g)
  &:=\sum_{\gamma\in\{0,1\}^2\setminus\{0\}}\sum_{k=2}^\infty \tau_{a,b}^{(\gamma,k)}(f,g), \\
  \tau_{a,b}^{(\gamma,k)}(f,g)
  &:=\sum_{\substack{P,Q\in\mathscr D_{[a,b)} \\ 2^{k-3}<\abs{z_P-z_Q}/\ell(P)\leq 2^{k-2}}}
   \tau(D_{P,Q}^{\gamma_1} f,D_{Q,P}^{\gamma_2}g)
\end{split}
\end{equation*}
where
\begin{equation*}
  D_{P,Q}^1 f:=D_P f,\qquad D_{P,Q}^0 f:=D_{P,Q} f:=(\ave{f}_P-\ave{f}_Q)1_P.
\end{equation*}
\end{lemma}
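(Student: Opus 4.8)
The plan is to start from the three types of terms appearing in $\tau_{a,b}(f,g)$, namely $\tau(D_if,D_ig)$, $\tau(D_if,E_ig)$ and $\tau(E_if,D_ig)$, and to decompose each operator $D_i$, $E_i$ into its ``pieces'' indexed by the dyadic cubes of generation $i$. Recall $D_i f=\sum_{P\in\mathscr D_i}D_Pf$ where $D_Pf=1_P\sum_{P'\in\children(P)}(\ave{f}_{P'}-\ave{f}_P)1_{P'}$ is the martingale difference supported on $P$, and $E_i g=\sum_{Q\in\mathscr D_i}\ave{g}_Q1_Q$. First I would expand $\tau(D_if,D_ig)=\sum_{P,Q\in\mathscr D_i}\tau(D_Pf,D_Qg)$ and split the double sum into the diagonal $P=Q$ and the off-diagonal $P\neq Q$; since $P$ and $Q$ are then disjoint cubes of equal side-length, the quantity $\abs{z_P-z_Q}/\ell(P)$ is at least $1$, so such a pair falls into exactly one of the shells $2^{k-3}<\abs{z_P-z_Q}/\ell(P)\leq 2^{k-2}$ with $k\geq 2$; this produces the $\gamma=(1,1)$ contribution to $\tau_{a,b}^{(\gamma,k)}$. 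This also shows the sum over $k$ is, for fixed $a,b$, really a finite sum, which sidesteps any convergence worry at this stage.

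Next I would treat the mixed terms. For $\tau(D_if,E_ig)=\sum_{P,Q\in\mathscr D_i}\ave{g}_Q\,\tau(D_Pf,1_Q)$, I would isolate, for each $P$, the ``near'' cubes $Q$ with $Q\subseteq 3P$ (or more precisely, the single cube $Q=P$ together with its neighbours — this is where one should be a little careful about exactly which $Q$ are grouped into the diagonal) and the ``far'' cubes $Q\not\subseteq 3P$. Summing the near cubes and using $\sum_{Q\in\mathscr D_i}\ave g_Q 1_Q=E_ig$, together with the normalisation $\sum_Q\ave g_Q\abs{Q\cap\text{(relevant region)}}$ reconstituting $\ave g_P$ on the scale where $D_Pf$ lives, I would collapse the near part into a term of the shape $\tau(D_Pf,1)\ave g_P$ — here the definition of $\tau(\cdot,1)$ from Lemma \ref{lem:T1} and Definition \ref{def:T1} is exactly what lets one replace the tail $\sum_{Q\not\subseteq 3P}$ by the kernel-integral correction built into $\tau(D_Pf,1)$. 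The leftover far part is then reorganised, exactly as in the $(1,1)$ case, into shells indexed by $k$, giving the $\gamma=(1,0)$ contribution $\tau(D_{P,Q}^1f,D_{Q,P}^0g)=\tau(D_Pf,(\ave g_Q-\ave g_P)1_Q)$; the subtraction of $\ave g_P$ is precisely the bookkeeping term that compensates having added and subtracted the diagonal average. The term $\tau(E_if,D_ig)$ is handled symmetrically and yields the $\gamma=(0,1)$ contribution together with $\ave f_P\,\tau(1,D_Pg)$.

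The main obstacle, and the step that deserves the most care, is the algebraic reorganisation that turns the ``near'' part of the mixed sums into the clean diagonal expressions $\ave f_P\tau(1,D_Pg)$ and $\tau(D_Pf,1)\ave g_P$: one must check that the correction integrals hidden in the definition of $\tau(1,\cdot)$ and $\tau(\cdot,1)$ (Lemma \ref{lem:T1}) really do account for the infinite tail of far cubes, and that the residual terms are exactly the $\gamma=(0,1)$ and $\gamma=(1,0)$ pieces with the $-\ave{\cdot}_P$ normalisation as written — in particular that no spurious boundary terms survive when $P$ ranges over a finite band $[a,b)$. I expect this to reduce, after carefully matching indices, to a finite rearrangement of absolutely convergent (indeed finite, once $f,g\in\mathscr I$) sums, so that Fubini-type interchanges are unproblematic; the content is entirely in the identification of terms, not in any estimate. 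Once the near/far split is performed consistently for all three term types and the shells in $k$ are read off, collecting everything gives exactly $\tau_{a,b}^{\operatorname{diag}}+\tau_{a,b}^{\operatorname{off}}$ as claimed.
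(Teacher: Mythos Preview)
Your overall strategy—expand into cubes, add and subtract $\ave{g}_P$, and collect shells—is on the right track, but the execution has two issues that the paper handles differently.

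First, the near/far split with $Q\subseteq 3P$ versus $Q\not\subseteq 3P$ is a wrong turn. The paper simply writes $\ave{g}_Q=(\ave{g}_Q-\ave{g}_P)+\ave{g}_P$ for \emph{every} $Q\in\mathscr D_i$ simultaneously. The $\ave{g}_P$-part then sums to $\ave{g}_P\sum_{Q\in\mathscr D_i}\tau(D_Pf,1_Q)=\ave{g}_P\,\tau(D_Pf,1)$, while the $(\ave{g}_Q-\ave{g}_P)$-part, which vanishes at $Q=P$, gives the full off-diagonal $\gamma=(1,0)$ contribution over \emph{all} $Q\neq P$, including the immediate neighbours of $P$, which land in the small shells $k=2,3$. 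Your plan would instead absorb the neighbours into the ``near'' diagonal block, but there is no way to collapse $\sum_{Q\subseteq 3P}\ave{g}_Q\,\tau(D_Pf,1_Q)$ into $\ave{g}_P\,\tau(D_Pf,1_{3P})$ unless $g$ happens to be constant on $3P$; the mismatch would not reproduce the stated $\tau_{a,b}^{(\gamma,k)}$. The definition of $\tau(\cdot,1)$ via Lemma~\ref{lem:T1} is not used here at all; rather, $\tau(D_Pf,1)$ arises directly as the (absolutely convergent) sum $\sum_{Q\in\mathscr D_i}\tau(D_Pf,1_Q)$.

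Second, and more substantively, you underestimate the convergence issue. The original double sum $\sum_{P,Q}\ave{g}_Q\,\tau(D_Pf,1_Q)$ is indeed finite for $f,g\in\mathscr I$, but the moment you factor out $\ave{g}_P$ (which is independent of $Q$), the inner sum $\sum_{Q\in\mathscr D_i}\tau(D_Pf,1_Q)$ runs over \emph{all} cubes of generation $i$ and is genuinely infinite. The paper explicitly pauses to verify its absolute convergence via the kernel smoothness bound and the Dini condition $\Norm{\omega}{\operatorname{Dini}}<\infty$, together with the weak boundedness property for the single term $Q=P$ and the size bound $\abs{K(x,y)}\leq c_K\abs{x-y}^{-d}$ for the remaining $Q\subseteq 3P\setminus P$. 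This is not a Fubini triviality: without it the splitting $\ave{g}_Q=(\ave{g}_Q-\ave{g}_P)+\ave{g}_P$ is unjustified, since each of the two resulting series is infinite with no a priori convergence. Once absolute convergence is established, the remainder of the argument is, as you say, a finite reorganisation into the shells $(2^{k-3},2^{k-2}]$.
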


\begin{remark}
Bi-linear forms with diagonal part only (i.e., with $\tau_{a,b}^{\operatorname{off}}\equiv 0$) are studied in \cite{AHMTT} under the name ``perfect dyadic Calder\'on--Zygmund operators''.
\end{remark}

\begin{proof}[Proof of Lemma \ref{lem:mainterm}]
Referring to Lemma \ref{lem:BCR} for the definition of $\tau_{a,b}(f,g)$, we begin by writing out and decomposing one of the terms as follows:
\begin{equation}\label{eq:DfEg}
\begin{split}
  \tau(D_i f,E_i g)
  &=\sum_{P,Q\in\mathscr D_i}\tau(D_P f,E_Q g) 
  =\sum_{P,Q\in\mathscr D_i}\tau(D_P f,1_Q)\ave{g}_Q \\
  &=\sum_{P,Q\in\mathscr D_i}\tau(D_P f,1_Q)\Big((\ave{g}_Q-\ave{g}_P)+\ave{g}_P\Big).
\end{split}
\end{equation}
In the original form with $\tau(D_P f,E_Q g)$, this term is non-zero only if $P$ meets the support of $f$ and $Q$ meets the support of $g$; thus both cubes range over finitely many choices only, and there are no issues of convergence. We would like to split the sum into two at the plus sign on the right, but this needs some care: for $\tau(D_P f,1_Q)\ave{g}_P$ to be nonzero, the cube $P$ must meet the supports of both $f$ and $g$, and is hence restricted to a finite set, but the cube $Q\in\mathscr D_i$ may be arbitrary, and hence we need to take care of the convergence of an infinite series to make such a splitting.

We will shortly recall a well-known argument showing that, under standard kernel assumptions, the series
\begin{equation}\label{eq:tauDP1}
  \sum_{Q\in\mathscr D_i}\tau(D_P f,1_Q)=:\tau(D_P f,1)
\end{equation}
converges absolutely (and uniformly with respect to the choice of the system of dyadic cubes $\mathscr D_i$ of sidelength $2^{-i}$). Taking this for granted for the moment, it follows that also
\begin{equation}\label{eq:tauDP1g}
  \sum_{P,Q\in\mathscr D_i}\tau(D_P f,1_Q)\ave{g}_P
  =\sum_{P\in\mathscr D_i}\Big(\sum_{Q\in\mathscr D_i}\tau(D_P f,1_Q)\Big)\ave{g}_P
  =\sum_{P\in\mathscr D_i}\tau(D_P f,1)\ave{g}_P
\end{equation}
converges absolutely as a sum of finitely many (corresponding to those $P\in\mathscr D_i$ that intersect the bounded support of $g$) absolutely convergent series. Since
\begin{equation*}
  \sum_{P,Q\in\mathscr D_i}\tau(D_P f,1_Q)(\ave{g}_Q-\ave{g}_P)
\end{equation*}
differs from \eqref{eq:tauDP1g} by the finitely nonzero sum $\sum_{P,Q\in\mathscr D_i}\tau(D_P f,1_Q)\ave{g}_Q$ only, it must also converge absolutely.

To see the absolute convergence of \eqref{eq:tauDP1}, for $Q\subset(3P)^c$, we have
\begin{equation*}
\begin{split}
    \tau(D_P f,1_Q)
    &=\iint K(x,y)D_Pf(y)1_Q(x)\ud y\ud x \\    
    &=\iint [K(x,y)-K(x,z_P)]D_Pf(y)1_Q(x)\ud y\ud x,
\end{split}
\end{equation*}
hence
\begin{equation*}
\begin{split}
  \abs{\tau(D_P f,1_Q)} &\lesssim\iint \omega\Big(\frac{\frac12\ell(P)}{\abs{x-z_P}}\Big)\frac{1}{\abs{x-z_P}^d}\abs{D_Pf(y)}1_Q(x)\ud y\ud x \\
  &=\Norm{D_P f}{1}\int_Q \omega\Big(\frac{\frac12\ell(P)}{\abs{x-z_P}}\Big)\frac{1}{\abs{x-z_P}^d}\ud x
\end{split}
\end{equation*}
and thus
\begin{equation*}
\begin{split}
  \sum_{\substack{Q\in\mathscr D_i \\ Q\subset(3P)^c}} \abs{\tau(D_P f,1_Q)}
  &\lesssim\Norm{D_P f}{1}\int_{(3P)^c} \omega\Big(\frac{\frac12\ell(P)}{\abs{x-z_P}}\Big)\frac{1}{\abs{x-z_P}^d}\ud x \\
  &\lesssim\Norm{D_P f}{1}\Norm{\omega}{\operatorname{Dini}}.
\end{split}
\end{equation*}
While the convergence of the remaining finite sum over $Q\subset 3P$ is obvious, we also record the following explicit bounds:
For $Q\subset (3P)\setminus P$, we have
\begin{equation*}
   \abs{\tau(D_P f,1_Q)}
   \leq \iint\frac{c_K}{\abs{x-y}^d}\abs{D_P f(y)}1_Q(x)\ud y\ud x
   \lesssim c_K\abs{P}\Norm{D_P f}{\infty}\lesssim C_K\Norm{D_P f}{1}.
\end{equation*}
For $Q=P$, denoting by $P_i$ its dyadic children, we have
\begin{equation*}
   \abs{\tau(D_P f,1_P)}
   \leq\sum_{i,j=1}^{2^d}\abs{\ave{D_P f}_{P_i}}\abs{\tau(1_{P_i},1_{P_j})},
\end{equation*}
where $\abs{\tau(1_{P_i},1_{P_j})}\lesssim c_K\abs{P_i}$ for $i\neq j$ by the kernel bound, and $\abs{\tau(1_{P_i},1_{P_i})}\leq\Norm{\tau}{wbp}\abs{P_i}$ for $i=j$ by definition of the weak boundedness property. Hence
\begin{equation*}
  \abs{\tau(D_P f,1_P)}\lesssim(c_K+\Norm{\tau}{wbp})\sum_{i=1}^{2^d}\abs{P_i}\abs{\ave{D_P f}_{P_i}}
  =(c_K+\Norm{\tau}{wbp})\Norm{D_P f}{1}.
\end{equation*}
A combination of the previous few estimates shows that
\begin{equation*}
    \sum_{Q\in\mathscr D_i } \abs{\tau(D_P f,1_Q)}\lesssim\Big(\Norm{\tau}{wbp}+c_K+\Norm{\omega}{\operatorname{Dini}}\Big)\Norm{D_P f}{1}.
\end{equation*}

Having dealt with this absolute converge, we return to manipulating the original series.
Subsituting \eqref{eq:tauDP1g} into \eqref{eq:DfEg}, we obtain
\begin{equation*}
  \tau(D_i f,E_i g)
  =\sum_{P,Q\in\mathscr D_i}\tau(D_P f,1_Q)(\ave{g}_Q-\ave{g}_P)+\sum_{P\in\mathscr D_i}\tau(D_P f,1)\ave{g}_P.
\end{equation*}
Similarly,
\begin{equation*}
  \tau(E_i f,D_i g)
  =\sum_{P,Q\in\mathscr D_i}(\ave{f}_P-\ave{f}_Q)\tau(1_P,D_Q g)+\sum_{Q\in\mathscr D_i}\ave{f}_Q\tau(1,D_Q g).
\end{equation*}
Using the notation introduced in the statement of the lemma,
we can combine these into
\begin{equation}\label{eq:extractParap}
\begin{split}
  &\tau(D_i f,D_i g)+\tau(D_i f,E_i g)+\tau(E_i f,D_i g) \\
  &=\sum_{P,Q\in\mathscr D_i}[\tau(D_P f,D_Q g)+\tau(D_{P,Q}f,D_Q g)+\tau(D_P f,D_{Q,P}g)] \\
  &\qquad+\sum_{P\in\mathscr D_i}[\ave{f}_P\tau(1,D_P g)+\tau(D_P f,1)\ave{g}_P].
\end{split}
\end{equation}
For $P=Q$, we have $D_{P,Q}=D_{Q,P}=0$, and hence
\begin{equation}\label{eq:extractHaarMult}
\begin{split}
  \sum_{P,Q\in\mathscr D_i} &[\tau(D_P f,D_Q g)+\tau(D_{P,Q}f,D_Q g)+\tau(D_P f,D_{Q,P}g)] \\
  &=\sum_{P\in\mathscr D_i}\tau(D_P f,D_P g)+\sum_{\substack{P,Q\in\mathscr D_i \\ P\neq Q}}\sum_{\gamma\in\{0,1\}^2\setminus\{0\}}
    \tau(D_{P,Q}^{\gamma_1}f,D_{Q,P}^{\gamma_2}g).
\end{split}
\end{equation}
When summed over $i\in[a,b)$, the second line on the right of \eqref{eq:extractParap} and the first term on the right of \eqref{eq:extractHaarMult} clearly combine to give the term $\tau_{a,b}^{\operatorname{diag}}(f,g)$ as asserted in the lemma. So it remains to check that last term of \eqref{eq:extractHaarMult} agrees with $\tau_{a,b}^{\operatorname{off}}(f,g)$. But this is straightforward: Since $P,Q\in\mathscr D_i$ with $P\neq Q$, their centres $z_P,z_Q$ are separated by $\abs{z_P-z_Q}\geq\ell(P)$, and we simply partition the values of $\abs{z_P-z_Q}/\ell(P)\in[1,\infty)$ into the disjoint intervals $(2^{k-3},2^{k-2}]$, where $k\geq 2$. The absolute convergence of the series that we already observed justifies this reorganisation.
\end{proof}


\section{Questions of measurability related to randomisation}\label{sec:meas}

We now consider a choice of a random dyadic system
\begin{equation*}
  \mathscr D^\theta:=\{Q\dot+\theta:Q\in\mathscr D^0\},
\end{equation*}
where $\mathscr D^0:=\{2^{-i}([0,1)^d+m):i\in\Z,m\in\Z^d\}$ is the standard dyadic system, and
\begin{equation*}
  Q\dot+\theta:=Q+\sum_{i:2^{-i}<\ell(Q)}2^{-i}\theta_i
\end{equation*}
for $\theta=(\theta_i)_{i\in\Z}\in\Theta:=(\{0,1\}^d)^{\Z}$. We say that $R=Q\dot+\theta\in\mathscr D^{\theta}$ is $k$-good, if $R\subset\frac12 R^{(\theta,k)}$, where $R^{(\theta,k)}$ is the $k$th dyadic ancestor of $R$ in the system $\mathscr D^{\theta}$. If $\Theta$ is equipped with the canonical product probability $\P$, one find that, for all $Q\in\mathscr D^0$,
\begin{equation*}
  \P(\{\theta:Q\dot+\theta\text{ is $(k,\theta)$-good}\})=2^{-d}
\end{equation*}
and the spatial position and the $(k,\theta)$-goodness of $Q\dot+\theta$ are independent \cite[Lemma 12.3.33]{HNVW3}.

In order to compute expectations over sums like those in Lemma \ref{lem:mainterm}, where $\mathscr D=\mathscr D^\theta$ is a random dyadic system, some issues of measurability need to be addressed. 

\begin{lemma}\label{lem:measmain}
For $\gamma\in\{0,1\}^2\setminus\{0\}$, the function
$\theta\in\Theta\mapsto\tau(D_{P\dot+\theta,Q\dot+\theta}^{\gamma_1}f,D_{Q\dot+\theta,P\dot+\theta}^{\gamma_2} g)$ is measurable.
\end{lemma}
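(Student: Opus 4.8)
The plan is to reduce the claim to the measurability of a few elementary building blocks as functions of $\theta\in\Theta$, and then invoke that finite sums, products, and compositions of measurable functions with continuous maps are measurable. The key observation is that each of the quantities $D^{\gamma_i}_{\cdot,\cdot}$, applied to a test function $f\in\mathscr I$, depends on $\theta$ only through finitely many coordinates $\theta_i$, except for the translation parameter $z_{P\dot+\theta}-z_{Q\dot+\theta}$, which is a genuinely $\Theta$-measurable (indeed continuous in the product topology) function of $\theta$.

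First I would fix the two cubes $P,Q\in\mathscr D^0$ in question and note that they lie in a common generation, say $P,Q\in\mathscr D^0_i$, so that $P\dot+\theta,Q\dot+\theta\in\mathscr D^\theta_i$ with the \emph{same} translation vector $v(\theta):=\sum_{j:2^{-j}<\ell(P)}2^{-j}\theta_j$; thus $P\dot+\theta=P+v(\theta)$ and $Q\dot+\theta=Q+v(\theta)$, and $v:\Theta\to\R^d$ is continuous (it is a uniformly convergent series of coordinate projections). Next I would expand each argument: by linearity it suffices to treat $f=1_R$, $g=1_S$ for rectangles $R,S$, and then
\begin{equation*}
  D^{1}_{P\dot+\theta,Q\dot+\theta}1_R = D_{P+v(\theta)}1_R = 1_{(P+v(\theta))\cap R}-\frac{\abs{(P+v(\theta))\cap R}}{\abs{P}}1_{P+v(\theta)},
\end{equation*}
while for $\gamma_1=0$,
\begin{equation*}
  D^{0}_{P\dot+\theta,Q\dot+\theta}1_R = \Big(\frac{\abs{(P+v(\theta))\cap R}}{\abs{P}}-\frac{\abs{(Q+v(\theta))\cap R}}{\abs{Q}}\Big)1_{P+v(\theta)}.
\end{equation*}
In either case the output is a finite linear combination of indicators of rectangles of the form $(P+v(\theta))\cap R$, $(Q+v(\theta))\cap R$, $P+v(\theta)$, with scalar coefficients that are continuous functions of $v(\theta)$ (averages of an $L^1$ function over translated boxes depend continuously on the translation); and the same for $g=1_S$.

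Then I would substitute into $\tau(D^{\gamma_1}_{\cdot}1_R, D^{\gamma_2}_{\cdot}1_S)$ and use bilinearity to write the whole quantity as a finite sum of terms of the shape $c_1(v(\theta))\,c_2(v(\theta))\,\tau(1_{A(\theta)},1_{B(\theta)})$, where $A(\theta),B(\theta)$ each run over $\{(P+v(\theta))\cap R,\ (Q+v(\theta))\cap R,\ P+v(\theta)\}$ and similarly with $S$. Since $c_1,c_2$ are continuous in $v(\theta)$ and $v$ is continuous in $\theta$, it remains to check that each $\theta\mapsto\tau(1_{A(\theta)},1_{B(\theta)})$ is measurable. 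Here I would distinguish cases: when $A(\theta)$ and $B(\theta)$ are disjoint rectangles, \eqref{eq:kernelRep} gives $\tau(1_{A(\theta)},1_{B(\theta)})=\iint K(x,y)1_{A(\theta)}(y)1_{B(\theta)}(x)\ud y\ud x$, which is continuous in $v(\theta)$ by dominated convergence using local integrability of $\abs{x-y}^{-d}$ off the diagonal; when they overlap, I would write $A(\theta)=(A(\theta)\setminus B(\theta))\cup(A(\theta)\cap B(\theta))$ and similarly for $B(\theta)$, splitting into the disjoint pieces plus the ``diagonal'' piece $\tau(1_{C(\theta)},1_{C(\theta)})$ with $C(\theta)=A(\theta)\cap B(\theta)$ a rectangle contained in a cube that is itself a translate $Q_0+v(\theta)$ of a fixed cube; but such a diagonal term is handled by reducing further to translates of a fixed cube and applying the measurability hypothesis \eqref{it:T1measmeas}. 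The main obstacle is precisely this last point: getting from ``$\tau(1_{Q+z},1_{Q+z})$ is measurable in $z$'' to measurability of $\tau(1_{R},1_{R})$ for an arbitrary rectangle $R$ translating with $v(\theta)$, and then to $\tau(1_R,1_S)$ for overlapping rectangles. I would handle it by a further polarization/splitting: any rectangle $R$ with $\ell^\infty$-diameter $\le\ell(Q_0)$ sitting inside $Q_0+v(\theta)$ can be written using indicators of dyadic-type sub-boxes of $Q_0+v(\theta)$, and $\tau(1_R,1_S)$ for $R,S\subseteq Q_0+v(\theta)$ expands — via the identity $1_R=1_{R\cap S}+1_{R\setminus S}$ and its analogue — into a finite combination of genuinely disjoint-rectangle terms (covered by the kernel representation, hence continuous in $v(\theta)$) plus terms $\tau(1_{E+v(\theta)},1_{E+v(\theta)})$ with $E$ ranging over a fixed finite family of sub-boxes of $Q_0$, each of which is measurable in $\theta$ by \eqref{it:T1measmeas} composed with the continuous map $v$. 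Assembling these finitely many measurable summands yields the claim; I would remark that under hypothesis \eqref{it:T1cont} the same argument gives continuity, which is the case actually needed for dominated-convergence arguments later.
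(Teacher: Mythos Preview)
Your overall strategy---reduce via the continuous map $\theta\mapsto v(\theta)$, expand bilinearly into elementary pieces, use the kernel representation for disjoint supports, and invoke the weak measurability hypothesis~\eqref{it:T1measmeas} for the remaining diagonal pieces---is exactly the paper's. But there is a concrete error that derails the diagonal case.

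Your displayed formula
\[
  D_{P+v(\theta)}1_R \;=\; 1_{(P+v(\theta))\cap R}-\frac{\abs{(P+v(\theta))\cap R}}{\abs{P}}\,1_{P+v(\theta)}
\]
is the formula for $F_{P+v(\theta)}1_R=1_{P+v(\theta)}(1_R-\ave{1_R}_{P+v(\theta)})$, not for the martingale difference $D_{P+v(\theta)}1_R$. The correct expression is
\[
  D_{P+v}1_R
  =\sum_{P'\in\children(P)}\frac{\abs{(P'+v)\cap R}}{\abs{P'}}\,1_{P'+v}
  -\frac{\abs{(P+v)\cap R}}{\abs{P}}\,1_{P+v},
\]
a linear combination of indicators of the \emph{cubes} $P'+v$ and $P+v$ (translates of fixed cubes) with continuous scalar coefficients. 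With this in hand, for $\gamma=(1,1)$ and $P=Q$ you are immediately reduced to terms $\tau(1_{P'_1+v},1_{P'_2+v})$ with $P'_1,P'_2$ among the children of $P$ (or $P$ itself): for $P'_1\neq P'_2$ the kernel representation gives continuity in $v$; for $P'_1=P'_2$ these are exactly of the form $\tau(1_{Q+v},1_{Q+v})$ with $Q$ a cube, so~\eqref{it:T1measmeas} applies directly. This is essentially the paper's argument, which reaches the same endpoint via the Haar expansion and the decomposition $h_P^\alpha=\sum_{P'\in\children(P)}\ave{h_P^\alpha}_{P'}1_{P'}$.

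By contrast, your wrong formula produces indicators of the \emph{rectangles} $(P+v)\cap R$, whose shape genuinely varies with $v$; they are not of the form $E+v$ for any fixed set $E$. Consequently your claimed reduction to ``terms $\tau(1_{E+v(\theta)},1_{E+v(\theta)})$ with $E$ ranging over a fixed finite family of sub-boxes of $Q_0$'' does not hold, and even if it did, hypothesis~\eqref{it:T1measmeas} is stated only for cubes, not general rectangles. There is no evident finite polarization that reduces $\tau(1_{R'},1_{R'})$ for a moving rectangle $R'$ to the cube case without already assuming more continuity of $\tau$ than is given. Correcting the formula for $D_P$ removes the obstacle and your argument then coincides with the paper's.
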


\begin{proof}
Since the mapping $\theta\in\Theta\mapsto\sum_{i:2^{-i}<\ell(P)}2^{-i}\theta_i\in[0,\ell(P)]^d$ is measurable, it is enough to verify that
\begin{equation*}
  u\in[0,\ell(P)]^d\mapsto \tau(D_{P+u,Q+u}^{\gamma_1}f,D_{Q+u,P+u}^{\gamma_2} g)
\end{equation*}
is measurable. 
Let first $\gamma=(1,1)$, i.e., $D_{P+u,Q+u}^{\gamma_1}=D_{P+u}$ and $D_{Q+u,P+u}^{\gamma_2}=D_{Q+u}$. We note that any $D_R$ can be written in terms of the Haar functions as
\begin{equation*}
  D_R f=\sum_{\alpha\in\{0,1\}^d\setminus\{0\}}\pair{f}{h_R^\alpha}h_R^\alpha,\qquad
  h_R^\alpha(x):=\prod_{i=1}^d h_{R_i}^{\alpha_i}(x_i),
\end{equation*}
where, denoting by $I_{\rm left}$ and $I_{\rm right}$ the left and right halves of an interval $I\subset\R$,
\begin{equation*}
  h_I^0:=\abs{I}^{-1/2}1_I,\quad h_I^1:=\abs{I}^{-1/2}(1_{I_{\rm left}}-1_{I_{\rm right}}).
\end{equation*}
It is immediate from these formulas that
\begin{equation*}
  h_{Q+z}^\alpha=h_Q^\alpha(\cdot-z),
\end{equation*}
i.e., a Haar function on a translated cube is simply the corresponding translate of the Haar function on the original cube.

Thus
\begin{equation*}
  \tau(D_{P+u}f,D_{Q+u}g)
  =\sum_{\alpha,\beta\in\{0,1\}^d\setminus\{0\}}\pair{f}{h_{P+u}^\alpha}\tau(h_{P+u}^\alpha,h_{Q+u}^\beta)\pair{g}{h_{Q+u}^\beta},
\end{equation*}
where
\begin{equation*}
  \pair{f}{h_{P+u}^\alpha}
  =\pair{f}{h_P^\alpha(\cdot-u)}=\pair{f(\cdot+u)}{h_P^\alpha}
\end{equation*}
and similarly for $g$. Since $u\mapsto f(\cdot+u)\in L^p(\R^d)$ is continuous, it is clear that these factors are continuous and hence measurable in $u$.

Concerning the factor involving the bilinear form, let first $P\neq Q$. In this case, the bilinear form is defined by the absolutely convergent integral
\begin{equation}\label{eq:tauhh}
\begin{split}
  \tau(h_{P+u}^\alpha,h_{Q+u}^\beta)
  &=\iint K(x,y)h_{P+u}^\alpha(y)h_{Q+u}^\beta(x)\ud x\ud y \\
  &=\iint K(x,y)h_{P}^\alpha(y-u)h_{Q}^\beta(x-u)\ud x\ud y \\
  &=\iint K(x+u,y+u)h_{P}^\alpha(y) h_{Q}^\beta(x) \ud x\ud y.
\end{split}
\end{equation}
The Calder\'on--Zygmund kernel $K$ is continuous away from the diagonal; thus $K(x+u,y+u)\to K(x+u_0,y+u_0)$ at every $x\neq y$ as $u\to u_0$. On the other hand, the integrand is pointwise dominated by $\abs{(x+u)-(y+u)}^{-d}=\abs{x-y}^{-d}$, which is integrable over $(x,y)\in Q\times P$ for disjoint cubes $P$ and $Q$, and hence the continuity, and thus measurability, of this factor follows from dominated convergence.

Let us then consider the case $P=Q$. We denote by $P_1$ the ``lower left quadrant'' of $P$. Then
\begin{equation*}
  h_{P}^\alpha
  =\sum_{\gamma\in\{0,\ell(P)/2\}} 1_{P_1+\gamma}\ave{h_{P}^\alpha}_{P_1+\gamma}
\end{equation*}
and hence
\begin{equation*}
  h_{P+u}^\alpha=h_P^\alpha(\cdot-u)
 =\sum_{\gamma\in\{0,\ell(P)/2\}} 1_{P_1+\gamma}(\cdot-u)\ave{h_{P}^\alpha}_{P_1+\gamma}.
\end{equation*}
It follows that
\begin{equation*}
  \tau(h_{P+u}^\alpha,h_{P+u}^\beta)
  =\sum_{\gamma,\delta\in\{0,\ell(P)/2\}^d}\ave{h_{P}^\alpha}_{P_1+\gamma}\ave{h_P^\beta}_{P_1+\delta}
    \tau(1_{P_1+\gamma}(\cdot-u),1_{P_1+\delta(\cdot-u)}),
\end{equation*}
and it remains to check the measurability of the last factor. For $\gamma\neq\delta$, we can use the kernel representation as in \eqref{eq:tauhh} to see that
\begin{equation*}
  \tau(1_{P_1+\gamma}(\cdot-u),1_{P_1+\delta(\cdot-u)})
  =\iint K(x+u,y+u)1_{P_1+\gamma}(y)1_{P_1+\delta}(x)\ud x\ud y,
\end{equation*}
whose continuity and hence measurability follows from the continuity of $u\mapsto K(x+u,y+u)$ and dominated convergence. For $\gamma=\delta$, we have
\begin{equation*}
  \tau(1_{P_1+\gamma}(\cdot-u),1_{P_1+\gamma(\cdot-u)})
  =\tau(1_{P_1+\gamma+u},1_{P_1+\gamma+u}),
\end{equation*}
which is measurable by the assumed weak measurability property \eqref{it:T1measmeas} of Theorem \ref{thm:T1} (with the cube $P_1+\gamma$ in place of $Q$).

This completes the proof of the measurability of $u\mapsto\tau(D_{P+u}f,D_{Q+u}g)$, and we turn to the case $\gamma=(1,0)$, i.e., $D_{P+u,Q+u}^{\gamma_1} f=D_{P+u}f$ and $D_{Q+u,P+u}^{\gamma_2} g=(\ave{g}_{Q+u}-\ave{g}_{P+u})1_{Q+u}$. Thus we are dealing with
\begin{equation*}
\begin{split}
  \tau(D_{P+u}f, &(\ave{f}_{Q+u}-\ave{f}_{P+u})1_{Q+u}) \\
  &=\sum_{\alpha\in\{0,1\}^d\setminus\{0\}}\pair{f}{h_{P+u}^\alpha}\tau(h_{P+u}^\alpha,1_{Q+u})(\ave{g}_{Q+u}-\ave{g}_{P+u}).
\end{split}
\end{equation*}
The first factor is continuous in $u$ exactly as before, and the continuity of the last factor follows similarly from the continuity of translations of $L^{p'}(\R^d)$. Moreover, this last factor 
 vanishes for $Q=P$, so it now suffices to deal with $Q\neq P$, where the kernel representation is available for the middle factor. As in \eqref{eq:tauhh}, this can be written as
 \begin{equation*}
  \tau(h_{P+u}^\alpha,1_{Q+u})
  =\iint K(x+u,y+u)h_P^\alpha(y)1_Q(x)\ud x\ud y,
\end{equation*}
and the continuity follows in exactly the same way as before. Finally, the case of $\gamma=(0,1)$, i.e., of $\tau(D_{P+u,Q+u}f,D_Q g)$, is entirely symmetric.
\end{proof}

\begin{lemma}\label{lem:measBMO}
$\theta\in\Theta\mapsto\tau(1,D_{P\cdot+\theta}g)$ and $\theta\in\Theta\mapsto\tau(D_{P\dot+\theta}f,1)$ are measurable.
\end{lemma}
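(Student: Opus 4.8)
The plan is to reduce the measurability of $\theta\mapsto\tau(1,D_{P\dot+\theta}g)$ to that of a map in the single translation parameter $u\in[0,\ell(P)]^d$, exactly as in the proof of Lemma \ref{lem:measmain}: since $\theta\mapsto\sum_{i:2^{-i}<\ell(P)}2^{-i}\theta_i$ is measurable from $\Theta$ into $[0,\ell(P)]^d$, it suffices to show that $u\mapsto\tau(1,D_{P+u}g)$ is measurable. Here $\tau(1,D_{P+u}g)$ is to be understood via Definition \ref{def:T1}, i.e. through the expression \eqref{eq:T1} of Lemma \ref{lem:T1} applied to the zero-mean function $h=D_{P+u}g\in\mathscr I_0$ (which is supported in $P+u$, and whose integral vanishes).

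The natural choice of auxiliary cube in \eqref{eq:T1} is a single fixed cube $Q_0$ that contains $P+u$ for all $u$ in the range, e.g. $Q_0=3P$ recentred appropriately, or more simply any dyadic-type cube with $\ell(Q_0)$ large enough; by the independence of \eqref{eq:T1} on the choice of containing cube (Lemma \ref{lem:T1}), this is legitimate. With such a fixed $Q_0$, the formula \eqref{eq:T1} reads
\begin{equation*}
  \tau(1,D_{P+u}g)=\tau(1_{Q_0},D_{P+u}g)+\iint[K(x,y)-K(z_{Q_0},y)]1_{Q_0^c}(y)(D_{P+u}g)(x)\ud x\ud y.
\end{equation*}
I would handle the two terms separately. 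For the second (kernel) term, write $D_{P+u}g=\sum_{\alpha}\pair{g}{h_{P+u}^\alpha}h_{P+u}^\alpha$ and use $h_{P+u}^\alpha=h_P^\alpha(\cdot-u)$ together with the change of variables $x\mapsto x+u$, turning the integral into $\iint[K(x+u,y)-K(z_{Q_0},y)]1_{Q_0^c}(y)h_P^\alpha(x)\ud x\,(\dots)$; since $h_P^\alpha$ has mean zero we can further subtract $K(z_{P},y)$ inside, exploit the Dini kernel bound (as in the proof of Lemma \ref{lem:Fbfg} and in \eqref{eq:tauDP1}) to get an integrable dominating function uniform in $u$, and conclude continuity in $u$ by dominated convergence using continuity of $K$ off the diagonal; the coefficients $\pair{g}{h_{P+u}^\alpha}=\pair{g(\cdot+u)}{h_P^\alpha}$ are continuous in $u$ by continuity of translations on $L^{p'}$.

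For the first term $\tau(1_{Q_0},D_{P+u}g)$, expand $D_{P+u}g$ over the finitely many dyadic children of $P+u$ as linear combinations of indicators $1_{(P+u)_j}=1_{P_j}(\cdot-u)$ (the $P_j$ being the children of $P$); this reduces matters to the measurability in $u$ of $\tau(1_{Q_0},1_{P_j+u})$. Now $P_j+u$ ranges over rectangles contained in $Q_0$, and $1_{Q_0}=1_{P_j+u}+1_{Q_0\setminus(P_j+u)}$, so $\tau(1_{Q_0},1_{P_j+u})=\tau(1_{P_j+u},1_{P_j+u})+\tau(1_{Q_0\setminus(P_j+u)},1_{P_j+u})$ — wait, the arguments are in the wrong order, so more precisely split the \emph{first} slot: since $1_{Q_0}$ and $1_{P_j+u}$ overlap, write $1_{Q_0}=1_{P_j+u}+1_{Q_0\setminus(P_j+u)}$ in the first argument, giving $\tau(1_{Q_0},1_{P_j+u})=\tau(1_{P_j+u},1_{P_j+u})+\tau(1_{Q_0\setminus(P_j+u)},1_{P_j+u})$. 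The first piece is measurable in $u$ by the weak measurability assumption \eqref{it:T1measmeas} (with $P_j$ in place of $Q$). The second piece involves the disjoint set $Q_0\setminus(P_j+u)$ and $P_j+u$; decomposing $Q_0\setminus(P_j+u)$ into boundedly many rectangles (whose vertices depend continuously, in fact piecewise-linearly, on $u$) and applying the kernel representation \eqref{eq:kernelRep}, each resulting integral $\iint K(x,y)1_{\text{rect}(u)}(y)1_{P_j+u}(x)\ud y\ud x$ is continuous in $u$ by dominated convergence with dominating function $\abs{x-y}^{-d}$ on the relevant compact region (here one must be slightly careful near $u$-values where the geometry degenerates, but the integrand stays dominated by a fixed integrable function on a fixed bounded set, so dominated convergence still applies along sequences). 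The symmetric statement for $\theta\mapsto\tau(D_{P\dot+\theta}f,1)$ follows by the same argument with the roles of the two arguments of $\tau$ interchanged, using $\tau(h,1)$ from Definition \ref{def:T1}. The main obstacle I anticipate is the bookkeeping in decomposing $Q_0\setminus(P_j+u)$ into rectangles depending on $u$ and checking that the dominated-convergence argument survives the (measure-zero set of) $u$-values at which this decomposition changes combinatorial type; everything else is a routine repetition of the estimates already used for Lemmas \ref{lem:T1}, \ref{lem:Fbfg} and \ref{lem:measmain}.
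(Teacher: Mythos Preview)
Your approach is correct but takes a genuinely different route from the paper. The paper's proof invokes the $T(1)$ assumption \eqref{it:T1T1} at the very first step, writing $\tau(1,D_{P\dot+\theta}g)=\pair{b_1}{D_{P\dot+\theta}g}$ for the given $b_1\in\BMO(\R^d)$; after the same reduction to $u\in[0,\ell(P)]^d$ and the Haar expansion, one is left with $u\mapsto\pair{b_1}{h_{P+u}^\alpha}=\pair{b_1}{h_P^\alpha(\cdot-u)}$, whose continuity follows in one line from $\BMO$--$H^1$ duality and the continuity of translations on $H^1(\R^d)$ (or from an explicit $\BMO$ estimate). Your argument instead unpacks Definition~\ref{def:T1} directly and relies only on the kernel bounds and the weak measurability assumption \eqref{it:T1measmeas}, never using that $\tau(1,\cdot)$ is given by pairing against a $\BMO$ function. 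This buys you a slightly stronger statement---the measurability of these terms does not actually require the $\BMO$ hypothesis---at the cost of considerably more bookkeeping (the decomposition of $Q_0\setminus(P_j+u)$ and the dominated-convergence argument with a moving domain), whereas the paper's proof is essentially two lines. One cosmetic point: your displayed formula should read $1_{3Q_0}$ and $1_{(3Q_0)^c}$ to match the statement of Lemma~\ref{lem:T1}, with $Q_0$ a fixed cube containing all $P+u$; this is purely notational and does not affect the argument.
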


\begin{proof}
By symmetry, it is enough to consider the first case.
By the $T(1)$ assumptions, we have $\tau(1,D_{P\cdot+\theta}g)=\pair{b_1}{D_{P\cdot+\theta}g}$.
As in the proof of Lemma \ref{lem:measmain}, we are reduced to considering the measurability of $u\in[0,\ell(P)]^d\mapsto \pair{b_1}{D_{P+u}g}$ and further, by expanding in terms of the Haar functions, that of
\begin{equation*}
 u\mapsto\pair{b_1}{h_{P+u}^\alpha}
 =\pair{b_1}{h_P^\alpha(\cdot-u)}.
\end{equation*}
This is, in fact, continuous, which follows from the assumption that $b_1\in\BMO(\R^d)$, which acts continuously on the Hardy space $H^1(\R^d)$, from the observation that $h_P^\alpha\in H^1(\R^d)$, and finally from the continuity of translations on $H^1(\R^d)$. Alternatively, one can give the following explicit estimate:
\begin{equation*}
\begin{split}
  \abs{\pair{b_1}{h_{P+u}^\alpha}-  \pair{b_1}{h_P^\alpha}}
  &=\abs{\pair{b_1}{h_{P+u}^\alpha-h_P^\alpha}} \\
  &=\abs{\pair{b_1-c}{h_{P+u}^\alpha-h_P^\alpha}}, \qquad c=\text{any constant}, \\
  &\leq\int \abs{b_1-c}\abs{h_{P+u}^\alpha-h_P^\alpha} \\
  &\leq\Big( \int_{3P}\abs{b_1-c}^2\Big)^{1/2}\Norm{h_{P+u}^\alpha-h_P^\alpha}{L^2(\R^d)} \\
  &\lesssim\abs{3P}^{1/2}\Norm{b_1}{\BMO(\R^d)}\Norm{h_{P}^\alpha(\cdot-u)-h_P^\alpha}{L^2(\R^d)},
\end{split}
\end{equation*}
where the last factor tends to $0$ as $u\to 0$ by the continuity of translations on $L^2(\R^d)$.
\end{proof}


Let us also recall the error term, where we now spell out the dependence on the particular $\mathscr D=\mathscr D^\theta$:
\begin{equation}\label{eq:errorAltForm}
\begin{split}
  \mathscr E_{a,b}^{\mathscr D}(f,g)
  &= \tau(E_a^{\mathscr D}f,E_a^{\mathscr D}g)+
    \tau(F_b^{\mathscr D}f,g)+   \tau(E_b^{\mathscr D}f,F_b^{\mathscr D}g)  \\
  &= \tau(E_a^{\mathscr D}f,E_a^{\mathscr D}g)+
    \tau((I-E_b^{\mathscr D})f,g)+    \tau(E_b^{\mathscr D}f,(I-E_b^{\mathscr D})g) \\
  &= \tau(E_a^{\mathscr D}f,E_a^{\mathscr D}g)+
   \tau(f,g)-\tau(E_b^{\mathscr D}f,E_b^{\mathscr D}g),
\end{split}
\end{equation}
where
\begin{equation*}
  \tau(E_a^{\mathscr D}f,E_a^{\mathscr D}g)
  =\sum_{P,Q\in\mathscr D_a}\tau(E_P f,E_Q g)
  =\sum_{P,Q\in\mathscr D_a}\ave{f}_P\tau(1_P,1_Q)\ave{g}_Q,
\end{equation*}
and the expression with $b$ in place of $a$ is of course analogous.

\begin{lemma}\label{lem:errormeas}
For any $P,Q\in\mathscr D^0$ of the same generation, the function $\theta\mapsto \tau(E_{P\dot+\theta}f,E_{Q\dot+\theta}g)$ is measurable.
\end{lemma}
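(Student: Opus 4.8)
The plan is to reduce the measurability of $\theta\mapsto\tau(E_{P\dot+\theta}f,E_{Q\dot+\theta}g)$ to the same building blocks already treated in the proof of Lemma~\ref{lem:measmain}. As there, since $\theta\mapsto\sum_{i:2^{-i}<\ell(P)}2^{-i}\theta_i\in[0,\ell(P)]^d$ is measurable, it suffices to check that $u\in[0,\ell(P)]^d\mapsto\tau(E_{P+u}f,E_{Q+u}g)=\ave{f}_{P+u}\,\tau(1_{P+u},1_{Q+u})\,\ave{g}_{Q+u}$ is measurable. The two averaging factors are continuous in $u$ by the continuity of translations on $L^p(\R^d)$ and $L^{p'}(\R^d)$ (note $\ave{f}_{P+u}=\abs{P}^{-1}\pair{f(\cdot+u)}{1_P}$), so everything comes down to the measurability of $u\mapsto\tau(1_{P+u},1_{Q+u})$.

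The argument then splits exactly as in Lemma~\ref{lem:measmain}. If $P\neq Q$ (cubes of the same generation, hence disjoint), the kernel representation \eqref{eq:kernelRep} applies and
\begin{equation*}
  \tau(1_{P+u},1_{Q+u})=\iint K(x+u,y+u)1_P(y)1_Q(x)\ud x\ud y;
\end{equation*}
the integrand converges pointwise as $u\to u_0$ by the continuity of $K$ off the diagonal and is dominated by $\abs{x-y}^{-d}$, which is integrable over $Q\times P$, so dominated convergence gives continuity and thus measurability. If $P=Q$, then $\tau(1_{P+u},1_{P+u})=\tau(1_{P+u},1_{P+u})$ is directly measurable in $u$ by the weak measurability property \eqref{it:T1measmeas} (with the cube $P$ itself playing the role of the cube in that hypothesis). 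Combining the two cases, $u\mapsto\tau(1_{P+u},1_{Q+u})$ is measurable, and hence so is the product with the two continuous averaging factors.

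I do not expect any serious obstacle here: this lemma is strictly simpler than Lemma~\ref{lem:measmain}, since $E_P$ involves only the single indicator $1_P$ rather than a Haar expansion, so there is no need to decompose into quadrants or to handle several types $\gamma$. The only point requiring a line of care is the $P=Q$ case, where one cannot use the kernel representation (the rectangles are not disjoint) and must instead invoke hypothesis \eqref{it:T1measmeas} directly — this is precisely the situation for which that hypothesis was introduced, so it is immediate. One should just make sure to state that $P,Q\in\mathscr D^0$ of the same generation are either equal or disjoint, which legitimises the case distinction.
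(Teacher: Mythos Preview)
Your proposal is correct and follows essentially the same argument as the paper's own proof: reduce to the $u$-parameter via the measurable map $\theta\mapsto\sum 2^{-i}\theta_i$, factor $\tau(E_{P+u}f,E_{Q+u}g)$ as $\ave{f}_{P+u}\,\tau(1_{P+u},1_{Q+u})\,\ave{g}_{Q+u}$, handle the averages by continuity of translations, and treat $\tau(1_{P+u},1_{Q+u})$ by the kernel representation plus dominated convergence when $P\neq Q$ and by the weak measurability hypothesis \eqref{it:T1measmeas} when $P=Q$. Your additional remark that cubes of the same generation are either equal or disjoint is a welcome clarification, but otherwise the arguments coincide.
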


\begin{proof}
As before, we are reduced to considering the measurability of
\begin{equation*}
 u\in[0,\ell(P)]^d\mapsto \tau(E_{P+u}f,E_{Q+u}g)=\ave{f}_{P+u}\tau(1_{P+u},1_{Q+u})\ave{g}_{Q+u}, 
\end{equation*}
where $u\mapsto \ave{f}_{P+u}=\ave{f(\cdot-u)}_P$ is continuous by the continuity of translations on $L^p(\R^d)$, and similarly with $u\mapsto\ave{g}_{Q+u}$. If $P\neq Q$ are cubes of the , then
\begin{equation*}
\begin{split}
  u\mapsto \tau(1_{P+u},1_{Q+u})
  &=\iint K(x,y)1_{P+u}(y)1_{Q+u}(x)\ud x\ud y \\
  &=\iint K(x+u,y+u)1_P(y)1_Q(x)\ud x\ud y
\end{split}
\end{equation*}
is continuous by the continuity of $K$ away from the diagonal, the uniform upper bound $\abs{K(x+u,y+u)}\leq c_K\abs{(x+u)-(y+u)}^{-d}=\abs{x-y}^{-d}$, the integrability of this function over $(x,y)\in Q\times P$, and dominated convergence. If $P=Q$, then $u\mapsto\tau(1_{P+u},1_{P+u})$ is measurable by the assumed weak measurability property.
\end{proof}

\begin{remark}
From Lemma \ref{lem:errormeas} and \eqref{eq:errorAltForm}, it follows that $\theta\mapsto\mathscr E_{a,b}^{\mathscr D^\theta}(f,g)$ is measurable. This would also follow from Lemmas \ref{lem:measmain} and \ref{lem:measBMO}, which guarantee the measurability of $\theta\mapsto\tau_{a,b}^{\mathscr D^\theta}(f,g)$, and the identity
\begin{equation*}
  \mathscr E_{a,b}^{\mathscr D^\theta}(f,g)=\tau(f,g)-\tau_{a,b}^{\mathscr D^\theta}(f,g),
\end{equation*}
where $\tau(f,g)$ is constant in $\theta$, and therefore trivially measurable. On the other hand, it is not obvious whether or not we have the separate measurability of the two terms $\tau(F_b^{\mathscr D^\theta}f,g)$ and $\tau(E_b^{\mathscr D^\theta}f,F_b^{\mathscr D^\theta}g)$ appearing in the first version of the expansion of $\mathscr E_{a,b}^{\mathscr D^\theta}(f,g)$ in \eqref{eq:errorAltForm}, although their sum is measurable according to the last identity in \eqref{eq:errorAltForm} and Lemma \ref{lem:errormeas}. This is not a problem, since we have no need to try to compute expectations of these individual terms.
\end{remark}


\section{Averaging over the off-diagonal part}\label{sec:off}

Referring back to the expansion of the main term in Lemma \ref{lem:mainterm}, the reason for introducing the randomisation is that it allows one to rework the off-diagonal terms into a more amenable form. (On the other hand, the diagonal part is good enough as it stands, for any fixed dyadic system $\mathscr D$. Averaging this part over a random choice of $\mathscr D$ does not bring any simplification, but does not hurt either.)

For the off-diagonal part, the key notion is the following:

\begin{definition}\label{def:good}
We say that $Q\in\mathscr D^\theta$ is $(k,\theta)$-good, if $Q\subset\frac12 Q^{(k,\theta)}$, where $Q^{(k,\theta)}$ is the $k$th dyadic ancestor of $Q$ in the dyadic system $\mathscr D^{\theta}$.
\end{definition}

\begin{remark}\label{rem:good}
This notion of goodness is closely related to, and inspired by, a notion introduced by Nazarov, Treil and Volberg \cite{NTV:Cauchy,NTV:Tb} and subsequently used by several authors. The main point is that a good cube should be sufficiently separated from the boundary of some bigger cube. The exact details of Definition \ref{def:good} are from \cite{GH:2018}.
\end{remark}

Several variants of the following basic lemma are well known; see \cite[Lemma 12.3.33]{HNVW3} for the present version.

\begin{lemma}\label{lem:good}
For $Q\in\mathscr D^0$ and $k\geq 2$,
\begin{enumerate}
  \item\label{it:indep} the random set $Q\dot+\theta$ and the event ``$Q\dot+\theta$ is $(\theta,k)$-good'' are independent,
  \item\label{it:2d} $\P(Q\dot+\theta\textup{ is }(\theta,k)\textup{-good})=2^{-d}$,
\end{enumerate}
where $\P$ refers to the canonical probability on $\Theta=(\{0,1\}^d)^{\Z}$.
\end{lemma}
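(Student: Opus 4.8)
\textbf{Proof plan for Lemma \ref{lem:good}.}

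The plan is to reduce both statements to an explicit computation on the finite-dimensional marginals of the product space $\Theta=(\{0,1\}^d)^{\Z}$. Fix $Q\in\mathscr D^0$, say $Q=2^{-i_0}([0,1)^d+m)$, so that $\ell(Q)=2^{-i_0}$. The random translation $Q\dot+\theta=Q+\sum_{i>i_0}2^{-i}\theta_i$ depends only on the coordinates $(\theta_i)_{i>i_0}$. By definition, the $k$th dyadic ancestor $Q^{(k,\theta)}$ is the unique cube of $\mathscr D^\theta$ of sidelength $2^k\ell(Q)$ containing $Q\dot+\theta$; tracing through the construction of $\mathscr D^\theta$, one sees that $Q^{(k,\theta)}=Q^{(k)}\dot+\theta$, where $Q^{(k)}$ is the $k$th ancestor of $Q$ in $\mathscr D^0$, and that its position depends only on the coordinates $(\theta_i)_{i>i_0-k}$. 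Consequently the \emph{relative position} of $Q\dot+\theta$ inside $Q^{(k,\theta)}$ — which is all that enters the goodness condition $Q\dot+\theta\subset\tfrac12 Q^{(k,\theta)}$ — depends only on the finitely many coordinates $\theta_{i_0-k+1},\dots,\theta_{i_0}$, i.e.\ on the $k$ ``intermediate'' digits between the scale of $Q$ and the scale of its $k$th ancestor.

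First I would make the last point quantitative. Writing $\theta_i=(\theta_i^{(1)},\dots,\theta_i^{(d)})\in\{0,1\}^d$, the offset of $Q\dot+\theta$ from the lower-left corner of $Q^{(k,\theta)}$, measured in units of $\ell(Q)$, is componentwise $\sum_{j=1}^{k}2^{j-1}\theta_{i_0-k+j}^{(\cdot)}\bmod 2^k$ plus a fixed ($\theta$-independent) integer shift coming from $m$; one checks that as $(\theta_{i_0-k+1},\dots,\theta_{i_0})$ ranges over $(\{0,1\}^d)^k$, this offset vector takes each value in $\{0,1,\dots,2^k-1\}^d$ exactly once. The cube $Q\dot+\theta$ (a unit cube in these units) lies in $\tfrac12 Q^{(k,\theta)}$ (the concentric half-size subcube, which in these units is $[\,2^{k-1}-2^{k-2},\,2^{k-1}+2^{k-2}\,]^d$ after accounting for where the center sits) precisely when the offset lies in a fixed axis-parallel box of integer sidelength $2^{k-1}$ in each coordinate — and here I would use $k\geq2$ so that $2^{k-1}\leq 2^k$ is a genuine constraint and the box has exactly $2^{k-1}$ admissible integer positions per coordinate. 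Hence the goodness event corresponds to $(2^{k-1})^d$ out of $(2^k)^d$ equally likely configurations of the intermediate digits, giving $\P=2^{-d}$, which is \eqref{it:2d}.

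For \eqref{it:indep}, the key observation is the separation of coordinates: the random \emph{set} $Q\dot+\theta$ is a measurable function of $(\theta_i)_{i>i_0}$, while the goodness \emph{event}, by the reduction above, is a measurable function of $(\theta_i)_{i_0-k<i\leq i_0}$ — \emph{together with} $(\theta_i)_{i>i_0}$ only through a shift that is itself determined by $Q\dot+\theta$. More precisely, I would condition on the $\sigma$-algebra generated by $(\theta_i)_{i>i_0}$ (equivalently, on the value of $Q\dot+\theta$): given this conditioning, the ancestor $Q^{(k,\theta)}$ is a deterministic cube containing the now-fixed $Q\dot+\theta$, the lower-order digits $(\theta_i)_{i_0-k<i\leq i_0}$ are still i.i.d.\ uniform and independent of the conditioning, and the goodness event is the event that those $k$ digits place $Q\dot+\theta$ in a fixed $2^{k-1}$-per-coordinate sub-box — an event of probability $2^{-d}$ \emph{regardless of the conditioning value}. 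By the standard criterion (an event whose conditional probability given $\sigma(Q\dot+\theta)$ is a.s.\ the constant $2^{-d}$ is independent of $\sigma(Q\dot+\theta)$), this yields both the independence in \eqref{it:indep} and, as a byproduct, re-confirms \eqref{it:2d}. The main obstacle, and the only place demanding genuine care, is the bookkeeping in the previous paragraph: correctly identifying which $\theta$-digits control the relative position of $Q\dot+\theta$ inside $Q^{(k,\theta)}$, and verifying that the half-size concentric subcube condition $Q\dot+\theta\subset\tfrac12 Q^{(k,\theta)}$ translates (for $k\geq2$) into exactly $(2^{k-1})^d$ admissible digit-configurations; everything after that is the routine ``constant conditional probability implies independence'' argument. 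Since this is precisely \cite[Lemma 12.3.33]{HNVW3}, I would in the write-up either cite it directly or reproduce the short computation just sketched.
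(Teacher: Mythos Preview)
The paper does not actually prove this lemma; it simply cites \cite[Lemma 12.3.33]{HNVW3}. Your sketch is the standard argument behind that citation, and your final conclusions are correct: the goodness event depends only on the intermediate digits $(\theta_i)_{i_0-k<i\leq i_0}$, while the position $Q\dot+\theta$ depends only on $(\theta_i)_{i>i_0}$, so independence follows; and the counting of admissible offsets gives the probability $2^{-d}$.

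One correction is worth making. Your assertion that $Q^{(k,\theta)}=Q^{(k)}\dot+\theta$ is not true in general. For a one-dimensional example, take $Q=[0,1)$, $\theta_0=1$ and all other $\theta_i=0$: then $Q\dot+\theta=[0,1)$ but $Q^{(1)}\dot+\theta=[0,2)+1=[1,3)$, which does not contain $[0,1)$. The $k$th ancestor of $Q\dot+\theta$ in $\mathscr D^\theta$ is the unique cube of the shifted level-$(i_0-k)$ grid containing $Q\dot+\theta$, and which $\mathscr D^0$-cube this corresponds to depends on the intermediate digits. You implicitly repair this in the next paragraph by inserting the ``$\bmod\ 2^k$'', which is exactly what accounts for the ancestor not being $Q^{(k)}\dot+\theta$. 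With that fix, the bookkeeping is correct: the offset of $Q\dot+\theta$ from the lower-left corner of its genuine $k$th ancestor, in units of $\ell(Q)$, is a function of the intermediate digits alone (plus the fixed contribution from $m$), uniformly distributed on $\{0,\dots,2^k-1\}^d$, and the goodness condition singles out exactly $(2^{k-1})^d$ of these values. I would also drop the phrase ``together with $(\theta_i)_{i>i_0}$ only through a shift\ldots''; the cleaner statement is that goodness is measurable with respect to $(\theta_i)_{i_0-k<i\leq i_0}$ alone, which makes the independence immediate by the product structure of $\P$.
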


With the help of these tools, we have:

\begin{lemma}\label{lem:shifts}
Denoting by $\E$ the expectation over $\theta\in\Theta$ with respect to the canonical probability on $\Theta=(\{0,1\}^d)^{\Z}$, the averages of the blocks $\tau_{a,b;\mathscr D^\theta}^{(\gamma,k)}(f,g)$ of  $\tau_{a,b;\mathscr D^\theta}^{\operatorname{off}}(f,g)$ in Lemma \ref{lem:mainterm} may be written as
\begin{equation*}
  \E\tau_{a,b;\mathscr D^\theta}^{(\gamma,k)}(f,g)
  =\omega(2^{-k})\E\mathfrak a_{a,b;\mathscr D^\theta}^{(\gamma,k)}(f,g),
\end{equation*}
where
\begin{equation*}
  \mathfrak a_{a,b;\mathscr D^\theta}^{(\gamma,k)}(f,g)
  =\sum_{S\in\mathscr D_{[a-k,b-k)}}a_{S}^{(\gamma,k)}(f,g)
\end{equation*}
and each $a_S^{(\gamma,k)}$ satisfies the size conditions
\begin{equation*}
   \abs{\mathfrak a_S^{(\gamma,k)}(f,g)}
   \lesssim\begin{cases}  \pair{E_S\abs{f}}{\abs{g}}, & \gamma=(1,1), \\
   \pair{(E_S+E_S^{(k)})\abs{f}}{\abs{g}}, & \text{else}, \end{cases}
\end{equation*}
and the cancellation conditions
\begin{equation*}
  \mathfrak a_S^{(\gamma,k)}(f,g)=\begin{cases} \mathfrak a_S^{(\gamma,k)}(D_S^{(k)}f,D_S^{(k)}g), & \gamma=(1,1), \\
     \mathfrak a_S^{(\gamma,k)}(D_S^{(k)}f,D_S^{[0,k)}g), & \gamma=(1,0), \\
      \mathfrak a_S^{(\gamma,k)}(D_S^{[0,k)}f,D_S^{(k)}g), & \gamma=(0,1), \end{cases}
\end{equation*}
where
\begin{equation*}
  D_S^{(k)}:=\sum_{\substack{P\in\mathscr D \\ P^{(k)}=S}}D_P,\quad
  E_S^{(k)}:=\sum_{\substack{P\in\mathscr D \\ P^{(k)}=S}}E_P,\quad
  D_S^{[0,k)}:=\sum_{j=0}^{k-1}D_S^{(j)}=E_S^{(k)}-E_S.
\end{equation*}
\end{lemma}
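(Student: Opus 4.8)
The proof of Lemma~\ref{lem:shifts} proceeds by taking the defining expression for $\tau_{a,b;\mathscr D^\theta}^{(\gamma,k)}(f,g)$ from Lemma~\ref{lem:mainterm}, reorganising the sum over pairs $(P,Q)$ of equal-side cubes according to their common $k$th ancestor $S:=P^{(k,\theta)}=Q^{(k,\theta)}$, and then computing the expectation with the aid of the independence statement in Lemma~\ref{lem:good}. First I would observe that the condition $2^{k-3}<\abs{z_P-z_Q}/\ell(P)\leq 2^{k-2}$ together with $P,Q\in\mathscr D_i$ forces $P$ and $Q$ to have a common ancestor precisely $k$ generations up; more precisely, after restricting (via the randomisation) to the event that $P$ is $(k,\theta)$-good, one has $P\subset\frac12 S$ with $S=P^{(k,\theta)}$, and the stated separation of centres then pins down $Q$ to lie in the same $S$ and to satisfy the symmetric goodness-type proximity. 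This is the step that converts a double sum over $\mathscr D_i\times\mathscr D_i$ into a sum over $S\in\mathscr D_{i-k}$ of a ``local'' bilinear expression supported on the children-at-depth-$k$ of $S$, and summing over $i\in[a,b)$ gives $S$ ranging over $\mathscr D_{[a-k,b-k)}$ as asserted.

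Next I would carry out the averaging. Writing $1=1_{\{P\dot+\theta\text{ good}\}}+1_{\{P\dot+\theta\text{ bad}\}}$ is not quite the right move; instead one uses that the goodness of $P\dot+\theta$ is, by Lemma~\ref{lem:good}\eqref{it:indep}, independent of the spatial data that determine the summand $\tau(D_{P,Q}^{\gamma_1}f,D_{Q,P}^{\gamma_2}g)$, so that inserting the indicator of the good event and dividing by $\P(\text{good})=2^{-d}$ (Lemma~\ref{lem:good}\eqref{it:2d}) does not change the expectation. Thus $\E\tau_{a,b;\mathscr D^\theta}^{(\gamma,k)}(f,g)=2^d\,\E\big[1_{\{P\dot+\theta\text{ good for all relevant }P\}}\tau_{a,b;\mathscr D^\theta}^{(\gamma,k)}(f,g)\big]$, and on the good event the pair $(P,Q)$ is organised into the cube $S$ as above. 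Pulling the factor $\omega(2^{-k})$ out is a matter of renormalising: each block $\tau(D_{P,Q}^{\gamma_1}f,D_{Q,P}^{\gamma_2}g)$ is estimated by the kernel bounds using the cancellation in the $D$-type arguments, and since $\abs{z_P-z_Q}/\ell(P)\sim 2^k$ the kernel smoothness contributes exactly a factor $\omega(2^{-k})$ (up to the $\abs{x-y}^{-d}$ which integrates against $\Norm{D_P f}{1}\Norm{D_Q g}{1}$-type quantities to produce the claimed $\pair{E_S\abs f}{\abs g}$ or $\pair{(E_S+E_S^{(k)})\abs f}{\abs g}$ bound). So one \emph{defines} $\mathfrak a_S^{(\gamma,k)}(f,g):=\omega(2^{-k})^{-1}\times(\text{the $S$-block})$ and reads off the size conditions from this estimate; for $\gamma=(1,1)$ both arguments carry genuine Haar cancellation and the support considerations give the single average $E_S$, whereas for $\gamma\in\{(1,0),(0,1)\}$ one of the arguments is a martingale-difference-of-averages $D_{P,Q}$ which is only constant on $P$ (not cancellative), accounting for the extra $E_S^{(k)}$ term.

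The cancellation conditions are then essentially bookkeeping: since each $a_S^{(\gamma,k)}(f,g)$ is assembled from terms $\tau(D_P f,\cdot)$ or $\tau(\cdot,D_Q g)$ with $P^{(k)}=Q^{(k)}=S$, replacing $f$ by $D_S^{(k)}f=\sum_{P^{(k)}=S}D_P f$ leaves the first argument unchanged whenever it is of $D_P$ type, and replacing $f$ by $D_S^{[0,k)}f=E_S^{(k)}f-E_S f$ leaves it unchanged whenever it is of the form $(\ave f_P-\ave f_Q)1_P$ with $P,Q\subset S$ — because such differences of averages over subcubes of $S$ are insensitive to adding a constant on $S$ and depend only on the projection $E_S^{(k)}f$, while $E_S f$ contributes nothing to the difference. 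Matching $\gamma_1,\gamma_2$ against the three cases $(1,1),(1,0),(0,1)$ gives exactly the displayed formulas. I expect the main obstacle to be the first step — making the passage from the pair-sum to the $S$-indexed sum fully rigorous, in particular verifying that on the good event the separation condition $2^{k-3}<\abs{z_P-z_Q}/\ell(P)\leq 2^{k-2}$ is equivalent to $Q$ lying in the appropriate ``annular'' family of depth-$k$ descendants of $S=P^{(k,\theta)}$, and that no pairs are lost or double-counted when one also symmetrises the goodness between $P$ and $Q$; the estimates producing the $\omega(2^{-k})$ factor and the size bounds are routine kernel manipulations of the same type already carried out in the proof of Lemma~\ref{lem:mainterm}, and can largely be quoted from \cite[Chapter 12]{HNVW3}.
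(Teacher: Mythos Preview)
Your proposal is correct and follows essentially the same route as the paper's proof: insert the $(k,\theta)$-goodness of $P\dot+\theta$ term by term via Lemma~\ref{lem:good}, use goodness plus the separation condition to force a common $k$th ancestor $S$, reorganise under $S$, extract $\omega(2^{-k})$ from the kernel estimate $\abs{\tau(h_P^\alpha,h_Q^\beta)}\lesssim\omega(2^{-k})2^{-kd}$, and quote \cite[Section 12.4.b]{HNVW3} for the size and cancellation verifications. Two small corrections: the goodness indicator is inserted \emph{per summand} (independence is applied for each fixed $P$, then one sums), not as a single event ``good for all relevant $P$''; and there is no symmetrisation of goodness between $P$ and $Q$---the paper uses only $P$'s goodness, which already yields $Q\dot+\theta\subset(P\dot+\theta)^{(k,\theta)}$ via the distance bound, so the obstacle you anticipate about double-counting under symmetrisation does not arise.
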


\begin{proof}
This can be found in \cite[Section 12.4.b]{HNVW3}, but we indicate some details for the reader's convenience.
Note that
\begin{equation*}
  \tau_{a,b;\mathscr D^\theta}^{(\gamma,k)}(f,g)
  =\sum_{\substack{P,Q\in\mathscr D_{[a,b)}^0 \\ 2^{k-3}<\abs{z_P-z_Q}/\ell(P)\leq 2^{k-2}}}
   \tau(D_{P\dot+\theta,Q\dot+\theta}^{\gamma_1} f,D_{Q\dot+\theta,P\dot+\theta}^{\gamma_2}g) \\
\end{equation*}
If $P\dot+\theta$ is $(k,\theta)$-good, so that $z_{P\dot+\theta}\in P+\theta\subset\frac12(P\dot+\theta)^{(k,\theta)}$, then
\begin{equation*}
  \operatorname{dist}(P\dot+\theta,\complement((P\dot+\theta)^{(k,\theta)}))\geq\frac14\ell((P\dot+\theta)^{(k,\theta)})=2^{k-2}\ell(P).
\end{equation*}
Then, if $\abs{z_{P\dot+\theta}-z_{Q\dot+\theta}}=\abs{z_P-z_Q}\leq 2^{k-2}\ell(P)$, it follows that $z_{Q\dot+\theta}\in (P\dot+\theta)^{(k,\theta)}$ and then, by the nestedness properties of cubes of $\mathscr D^\theta$, we have $Q\dot+\theta\subset (P\dot+\theta)^{(k,\theta)}$. In other words, $P\dot+\theta$ and $Q\dot+\theta$ have a common $k$th dyadic ancestor in $\mathscr D^\theta$. This is a desirable property that we can achieve with the help of random averaging and both parts \eqref{it:indep} and \eqref{it:2d} of Lemma \ref{lem:good} as follows:
\begin{equation*}
\begin{split}
  2^{-d} &\E\tau_{a,b;\mathscr D^\theta}^{(\gamma,k)}(f,g) \\
  &\overset{\eqref{it:2d}}=\sum_{\substack{P,Q\in\mathscr D_{[a,b)}^0 \\ 2^{k-3}<\abs{z_P-z_Q}/\ell(P)\leq 2^{k-2}}}
   \E(1_{\{P\dot+\theta\textup{ is $(k,\theta)$-good}\}})\E\tau(D_{P\dot+\theta,Q\dot+\theta}^{\gamma_1} f,D_{Q\dot+\theta,P\dot+\theta}^{\gamma_2}g) \\
  &\overset{\eqref{it:indep}}=\sum_{\substack{P,Q\in\mathscr D_{[a,b)}^0 \\ 2^{k-3}<\abs{z_P-z_Q}/\ell(P)\leq 2^{k-2}}}
   \E\Big(1_{\{P\dot+\theta\textup{ is $(k,\theta)$-good}\}} \tau(D_{P\dot+\theta,Q\dot+\theta}^{\gamma_1} f,D_{Q\dot+\theta,P\dot+\theta}^{\gamma_2}g)\Big) \\
   &=\E\sum_{\substack{P,Q\in\mathscr D_{[a,b)}^0 \\ 2^{k-3}<\abs{z_P-z_Q}/\ell(P)\leq 2^{k-2} \\ P\dot+\theta\textup{ is $(k,\theta)$-good}}}
   \tau(D_{P\dot+\theta,Q\dot+\theta}^{\gamma_1} f,D_{Q\dot+\theta,P\dot+\theta}^{\gamma_2}g)
   =:\E\tau_{a,b;\mathscr D^\theta}^{(\gamma,k;\operatorname{good})}(f,g).
\end{split}
\end{equation*}

Having reached this far, we may again ignore the presence of randomisation, and continue with the manipulation of $\tau_{a,b;\mathscr D^\theta}^{(\gamma,k;\operatorname{good})}(f,g)$ for a fixed $\mathscr D^\theta$. For the sake of brevity, we again drop the dependence on $\theta$, and consider simply
\begin{equation*}
  \tau_{a,b}^{(\gamma,k;\operatorname{good})}(f,g)
  =\sum_{\substack{P,Q\in\mathscr D_{[a,b)} \\ 2^{k-3}<\abs{z_P-z_Q}/\ell(P)\leq 2^{k-2} \\ P\textup{ is $k$-good}}}
   \tau(D_{P,Q}^{\gamma_1} f,D_{Q,P}^{\gamma_2}g)
\end{equation*}

As we already observed, for any pair $(P,Q)$ appearing in the summation above, there must be a common $k$th dyadic ancestor $S$. Thus we may reorganise the sum under these containing $S\in\mathscr D_{[a-k,b-k)}$ to arrive at
\begin{equation*}
\begin{split}
  \tau_{a,b}^{(\gamma,k;\operatorname{good})}(f,g) &=\sum_{S\in\mathscr D_{[a-k,b-k)}} \tau_{S}^{(\gamma,k)}(f,g), \\
  \tau_{S}^{(\gamma,k)}(f,g) &:=
  \sum_{\substack{P,Q\in\mathscr D; P^{(k)}=Q^{(k)}=S \\ 2^{k-3}<\abs{z_P-z_Q}/\ell(P)\leq 2^{k-2} \\ P\textup{ is $k$-good}}}
   \tau(D_{P,Q}^{\gamma_1} f,D_{Q,P}^{\gamma_2}g),
\end{split}
\end{equation*}
where
\begin{equation*}
   \tau(D_{P,Q}^{\gamma_1} f,D_{Q,P}^{\gamma_2}g)
   =\begin{cases} \sum_{\alpha,\beta\in\{0,1\}^d\setminus\{0\}}\pair{f}{h_P^\alpha}\tau(h_P^\alpha,h_Q^\beta)\pair{h_Q^\beta}{g}, & \gamma=(1,1), \\
   \sum_{\alpha\in\{0,1\}^d\setminus\{0\}}\pair{f}{h_P^\alpha}\tau(h_P^\alpha,1_Q)(\ave{g}_Q-\ave{g}_P), & \gamma=(1,0), \\
   \sum_{\beta\in\{0,1\}^d\setminus\{0\}}(\ave{f}_P-\ave{f}_Q)\tau(1_P,h_Q^\beta)\pair{h_Q^\beta}{g}, & \gamma=(0,1).
   \end{cases}
\end{equation*}
Thanks to the separation condition $2^{k-3}<\abs{z_P-z_Q}/\ell(P)\leq 2^{k-2}$, one can estimate coefficients involving $\tau$ via the kernel representation, to the result that
\begin{equation*}
   \abs{\tau(h_P^\alpha,h_Q^\beta)}
   \lesssim\omega(2^{-k})2^{-kd}
\end{equation*}
for all $\alpha,\beta\in\{0,1\}^d$ with $(\alpha,\beta)\neq(0,0)$, where $h_P^0:=\abs{P}^{-1/2}1_P$. Defining $\mathfrak a_S^{(\gamma,k)}:=\omega(2^{-k})^{-1}\tau_S^{(\gamma,k)}$, it is then straightforward to verify the asserted size and cancellation properties of $\mathfrak a_S^{(\gamma,k)}$; see \cite[Section 12.4.b]{HNVW3} for details.
\end{proof}

\section{Conclusion of the proof of Theorem \ref{thm:T1}}\label{sec:concl}

Combining Lemmas \ref{lem:BCR}, \ref{lem:mainterm} and \ref{lem:shifts}, we have
\begin{equation}\label{eq:truncDec}
\begin{split}
  \tau(f,g)
  &=\E\sum_{P\in\mathscr D_{[a,b)}}\Big(\tau(D_P f,D_P g)+\ave{f}_P\tau(1,D_P g)+\tau(D_P f,1)\ave{g}_P\Big) \\
  &\qquad+\E\sum_{\gamma\in\{0,1\}^2\setminus\{0\}}\sum_{k=2}^\infty\omega(2^{-k})\mathfrak a^{(\gamma,k)}_{a,b;\mathscr D}(f,g)
  +\E\mathscr E_{a,b}^{\mathscr D}(f,g)
\end{split}
\end{equation}
for all $f,g\in\mathscr I$.
(There is implicit understanding that $\mathscr D=\mathscr D^\theta$ and the expectation is taken over $\theta\in\Theta=(\{0,1\}^d)^{\Z}$.)
On the first line, we have truncated versions of the Haar multiplier
\begin{equation*}
  \mathfrak h_{\mathscr D}(f,g):=
  \sum_{\alpha,\beta\in\{0,1\}^d\setminus\{0\}}\pair{f}{h_P^\alpha}\tau(h_P^\alpha,h_P^\beta)\pair{h_P^\beta}{g}
  =\sum_{P\in\mathscr D}\tau(D_P f,D_P g),
\end{equation*}
where $\abs{\tau(h_P^\alpha,h_P^\beta)}\lesssim\Norm{\tau}{wbp}+c_K$ by considerations as seen above, the paraproduct
\begin{equation*}
  \pi_{b_1,\mathscr D}(f,g)=\sum_{P\in\mathscr D}\ave{f}_P\pair{D_P b_1}{g}=\sum_{P\in\mathscr D}\ave{f}_P\tau(1,D_P g)
\end{equation*}
and the adjoint paraproduct $\pi_{b_2,\mathscr D}^*(f,g):=\pi_{b_2,\mathscr D}(g,f)$. These are known to be bounded bilinear forms on $L^p(\R^d)\times L^{p'}(\R^d)$, with bounds dominated by $\Norm{\tau}{wbp}+c_K$ and $\Norm{b_i}{\BMO(\R^d)}$, respectively, independently of $\mathscr D$.

The truncated versions with $\mathscr D_{[a,b)}$ in place of $\mathscr D$ may be realised as
\begin{equation*}
\begin{split}
  \mathfrak h_{\mathscr D_{[a,b)}}(f,g)
  &=\mathfrak h_{\mathscr D}((E_b^{\mathscr D}-E_a^{\mathscr D})f,g)
  =\mathfrak h_{\mathscr D}(f,(E_b^{\mathscr D}-E_a^{\mathscr D})g), \\
  \pi_{b_1,\mathscr D_{[a,b)}}(f,g)
  &=\pi_{b_1,\mathscr D}(f,(E_b^{\mathscr D}-E_a^{\mathscr D})g), \\
  \pi_{b_2,\mathscr D_{[a,b)}}^*(f,g)
  &=\pi_{b_2,\mathscr D}^*((E_b^{\mathscr D}-E_a^{\mathscr D})f,g) \\
\end{split}
\end{equation*}
Note $\Norm{(E_b^{\mathscr D}-E_a^{\mathscr D})f}{p}\leq 2\Norm{f}{p}$
and $(E_b^{\mathscr D}-E_a^{\mathscr D})f\to f$ in $L^p(\R^d)$ as $-a,b\to\infty$. This, the uniform boundedness of the bilinear forms $\mathfrak h_{\mathscr D}$, $\pi_{b_1,\mathscr D}$ and $\pi_{b_2,\mathscr D}^*$, and dominated convergence on $\Theta$ shows that the first line on the right of \eqref{eq:truncDec} converges to
\begin{equation*}
  \E\Big(\mathfrak h_{\mathscr D}(f,g)+\pi_{b_1,\mathscr D}(f,g)+\pi_{b_2,\mathscr D}^*(f,g)\Big)
\end{equation*}
as $-a,b\to\infty$.

On the second line of \eqref{eq:truncDec}, $\mathfrak a_{a,b;\mathscr D}^{(\gamma,k)}$ is a truncated version of the new dyadic shifts first introduced in \cite{GH:2018}. The non-truncated versions are again known to be bounded on $L^p(\R^d)\times L^{p'}(\R^d)$, with the following bound (see \cite[Theorem 12.4.26]{HNVW3}):
\begin{equation*}
\begin{split}
  \Norm{\mathfrak a_{\mathscr D}^{(\gamma,k)}}{(p,p')}
  &\lesssim (1+\log_+ k)^{\Delta(\gamma,p)},\quad
  \Delta(\gamma,p)=\begin{cases} \frac12, & \gamma=(1,1), \\ \max(\frac12,\frac1p), & \gamma=(1,0), \\ \max(\frac12,\frac{1}{p'}), & \gamma=(0,1),\end{cases} \\
  &\leq (1+\log_+ k)^{1-1/p^*},\qquad p^*:=\max(p,p').
\end{split}
\end{equation*}

\begin{remark}\label{rem:type}
In fact, \cite[Theorem 12.4.26]{HNVW3} is stated more generally for functions taking values in a UMD space $X$, in terms of numbers $1\leq t\leq p\leq q\leq\infty$ such that the space has ``type'' $t$ and ``cotype'' $q$. The scalar field $X\in\{\R,\C\}$ has every type $t\in[1,2]$ and every cotype $q\in[2,\infty]$ (see \cite[Page 55]{HNVW2}), so that one can take $t=\min(2,p)$ and $q=\max(2,p)$. With these observations, the bounds above are easily seen to be special cases of those of \cite[Theorem 12.4.26]{HNVW3}.
\end{remark}

By inspection of the definitions, the truncated versions may be realised as
\begin{equation*}
\begin{split}
  \mathfrak a_{a,b;\mathscr D}^{((1,1),k)}(f,g)
  &=\mathfrak a_{\mathscr D}^{((1,1),k)}((E_b^{\mathscr D}-E_a^{\mathscr D})f,g)
  =\mathfrak a_{\mathscr D}^{((1,1),k)}(f,(E_b^{\mathscr D}-E_a^{\mathscr D})g), \\
  \mathfrak a_{a,b;\mathscr D}^{((1,0),k)}(f,g)
  &=\mathfrak a_{\mathscr D}^{((1,0),k)}((E_b^{\mathscr D}-E_a^{\mathscr D})f,g), \\
  \mathfrak a_{a,b;\mathscr D}^{((0,1),k)}(f,g)
  &=\mathfrak a_{\mathscr D}^{((0,1),k)}(f,(E_b^{\mathscr D}-E_a^{\mathscr D})g).
\end{split}
\end{equation*}
By the same argument as before, these approach the respective $\mathfrak a_{\mathscr D}^{(\gamma,k)}(f,g)$ as $-a,b\to\infty$. To guarantee the convergence of the whole sum with these terms in \eqref{eq:truncDec} via dominated convergence, we invoke the assumption that $\Norm{\omega}{\operatorname{Dini}^{1-\frac{1}{p^*}}}<\infty$, as
\begin{equation*}
  \sum_{k=2}^\infty\omega(2^{-k})\Norm{\mathfrak a_{\mathscr D}^{(\gamma,k)}}{(p,p')}
  \lesssim  \sum_{k=2}^\infty\omega(2^{-k})k^{1-\frac{1}{p^*}}
  \eqsim\Norm{\omega}{\operatorname{Dini}^{1-\frac{1}{p^*}}}.
\end{equation*}
Hence
\begin{equation*}
  \E\sum_{\gamma\in\{0,1\}^2\setminus\{0\}}\sum_{k=2}^\infty\omega(2^{-k})\mathfrak a^{(\gamma,k)}_{a,b;\mathscr D}(f,g)
  \underset{-a,b\to\infty}{\longrightarrow}\E\sum_{\gamma\in\{0,1\}^2\setminus\{0\}}\sum_{k=2}^\infty\omega(2^{-k})\mathfrak a^{(\gamma,k)}_{\mathscr D}(f,g).
\end{equation*}

Finally, we have verified in Corollary \ref{cor:BCRerrorToZero} that
\begin{equation*}
  \mathscr E_{a,b}^{\mathscr D}(f,g)\underset{-a,b\to\infty}{\longrightarrow} 0
\end{equation*}
for all $f,g\in\mathscr I$, and an inspection of the proof shows that the left hand side is dominated quantities independent of $\mathscr D$ that converge to $0$. Thus the same limit is also valid for the expectation of the left-hand side by dominated convergence.

Summarising the discussion, taking the relevant limits in \eqref{eq:truncDec} with the help of dominated convergence, we deduce that
\begin{equation}\label{eq:nontruncDec}
\begin{split}
  \tau(f,g)
  &=\E\Big(\mathfrak h_{\mathscr D}(f,g)+\pi_{b_1,\mathscr D}(f,g)+\pi_{b_2,\mathscr D}^*(f,g)\Big) \\
  &\qquad+\E\sum_{\gamma\in\{0,1\}^2\setminus\{0\}}\sum_{k=2}^\infty\omega(2^{-k})\mathfrak a^{(\gamma,k)}_{\mathscr D}(f,g)
\end{split}
\end{equation}
for all $f,g\in\mathscr I$. On the other hand, we have already observed the boundedness on $L^p(\R^d)\times L^{p'}(\R^d)$, with explicit dependence of the norms on $k$, of all the bilinear forms appearing on the right, and this immediately implies that
\begin{equation*}
  \abs{\tau(f,g)}\lesssim\Big(\Norm{\tau}{wbp}+c_K+\Norm{b_1}{\BMO}+\Norm{b_2}{\BMO}+\Norm{\omega}{\operatorname{Dini}^{1-\frac{1}{p^*}}}\Big)
     \Norm{f}{p}\Norm{g}{p'}.
\end{equation*}
Thus $\tau$ extends to a bounded bilinear form on $L^p(\R^d)\times L^{p'}(\R^d)$, and hence there is a unique $T\in\bddlin(L^p(\R^d))$ such that
\begin{equation*}
  \tau(f,g)=\pair{Tf}{g}
\end{equation*}
for all $f,g\in\mathscr I$. Of course, we similarly have operators $\mathfrak H_{\mathscr D},\Pi_{b_1,\mathscr D},\Pi_{b_2,\mathscr D}^*,A^{(\gamma,k)}_{\mathscr D}\in\bddlin(L^p(\R^d))$ corresponding to the bilinear forms $\mathfrak h_{\mathscr D},\pi_{b_1,\mathscr D},\pi_{b_2,\mathscr D}^*,\mathfrak a^{(\gamma,k)}_{\mathscr D}$.

The dyadic representation formula
\begin{equation}\label{eq:opDec}
\begin{split}
  \pair{Tf}{g}
  &=\E\Big(\pair{\mathfrak H_{\mathscr D}f}{g}+\pair{\Pi_{b_1,\mathscr D}f}{g}+\pair{\Pi_{b_2,\mathscr D}^*f}{g}\Big) \\
  &\qquad+\E\sum_{\gamma\in\{0,1\}^2\setminus\{0\}}\sum_{k=2}^\infty\omega(2^{-k})\pair{A^{(\gamma,k)}_{\mathscr D}f}{g}
\end{split}
\end{equation}
for $f,g\in\mathscr I$ is just a restatement of \eqref{eq:nontruncDec}. From this identity on the dense subspace $\mathscr I$ of both $L^p(\R^d)$ and $L^{p'}(\R^d)$, the same identity for general $f\in L^p(\R^d)$ and $g\in L^{p'}(\R^d)$ follows by continuity of both sides with respect to the relevant norms. This concludes the proof of Theorem \ref{thm:T1}.

\section{On an extension to UMD spaces}\label{sec:UMD}

While we have stated and proved Theorem \ref{thm:T1} for scalar-valued functions, we note that everything extends, {\em mutatis mutandis}, to the setting of bilinear forms $\tau:\mathscr I\times\mathscr I\to\bddlin(X,Y)$ and operators $T\in\bddlin(L^p(\R^d;X))$ such that $\pair{T(f\otimes x)}{g\otimes y^*}=\pair{\tau(f,g)x}{y^*}$ for all functions $f,g\in\mathscr I$ and vectors $x\in X$ and $y^*\in Y^*$, where $X$ and $Y$ are Banach spaces with the so-called UMD (unconditionality of martingale differences) property. (See \cite[Chapter 4]{HNVW1} for details of this notion.) In fact, the identities throughout this paper are exactly the same for scalar or vector-valued functions, whereas the estimates, which we mostly quoted from \cite[Chapter 12]{HNVW3}, are there developed in this very generality of UMD space -valued functions. Some further details worth mentioning are:
\begin{itemize}
  \item the required exponent in the Dini-condition will also depend on the ``type'' and ``cotype'' of spaces $X$ and $Y$ (see Remark \ref{rem:type}), and
  \item one needs to incorporate ``$R$-boundedness'' (see \cite[Chapter 8]{HNVW2} for details of this notion) into the weak boundedness property and the kernel estimates. ($R$-boundedness reduces to uniform boundedness for scalar-valued functions, but is a stronger property in general, which explains the absence of such condition in our scalar-valued considerations.)
\end{itemize}
Last but not least, the boundedness of paraproducts is a highly non-trivial and not completely understood question in the setting of operator-valued symbols. Thus, in the UMD space -valued dyadic representation \cite[Theorem 12.4.27]{HNVW3}, the classical $T(1)$ condition ``$\tau(1,g)=\pair{b_1}{g}$ and $\tau(f,1)=\pair{f}{b_2}$ for some $b_i\in\BMO(\R^d)$'', as in Theorem \ref{thm:T1}, is replaced by the condition that, for all $\mathscr D=\mathscr D^\theta$, where
\begin{equation*}
  \theta\in(\{0,1\}^d)^{\Z}_0:=\{\vartheta=(\vartheta_j)_{j\in\Z}\in(\{0,1\}^d)^{\Z}:\vartheta_j=0\text{ for all large enough }j\},
\end{equation*}
the bilinear form of the ``bi-paraproduct''
\begin{equation*}
  \lambda_{\mathscr D}(f,g):=\sum_{P\in\mathscr D}[\ave{f}_P\tau(1,D_P g)+\tau(D_P f,1)\ave{g}_P]
\end{equation*}
extends to a bounded linear operator $\Lambda_{\mathscr D}\in\bddlin(L^p(\R^d;X),L^p(\R^d;Y))$ such that
\begin{equation*}
  \pair{\Lambda_{\mathscr D}(f\otimes x)}{g\otimes y^*)}=\pair{\lambda_{\mathscr D}(f,g)x}{y^*}
\end{equation*}
for all $f,g\in\mathscr I$ and $x\in X$ and $y^*\in Y^*$, with operator norms of $\Lambda_{\mathscr D}$ uniformly bounded in $\mathscr D$.

Aside from the fact that this condition might be less readily checkable than the classical BMO conditions, this condition is also slightly unsatisfactory from the philosophical point of view in that we ask for a uniform bound over all $\mathscr D$, whereas the averaged form of the representation formula suggests than an average over $\mathscr D$ should suffice. The technical reason why such a formulation was not achieved in \cite[Chapter 12]{HNVW3} is the approach via an initial finitary truncation, as discussed in the Introduction: in this setting, the expectation (or the underlying probability space) changes as a function of the truncation parameter (see \eqref{eq:DRTlimit}), so that there is no meaningful expectation ``$\E\Lambda_{\mathscr D}$''. (In fact, the ``relevant'' $\mathscr D=\mathscr D^\theta$ in \cite[Theorem 12.4.27]{HNVW3} consist of those $\theta\in(\{0,1\}^d)^{\Z}$ such that $\theta_j\equiv 0$ for all large enough $j$. For each fixed $n$, there is a well-defined expectation on $(\{0,1\}^d)^{\Z_{\leq n}}$, but not on the union of these sets.)

However, this issue is avoided with the present representation of Theorem \ref{thm:T1}, where the expectation is always over $(\{0,1\}^d)^{\Z}$, and hence a bilinear form
\begin{equation*}
  \ave{\lambda}(f,g):=\E\lambda_{\mathscr D}(f,g)
\end{equation*}
on $\mathscr I\times\mathscr I$ is well-defined. This would allow a variant of \cite[Theorem 12.4.27]{HNVW3}, where the condition that every $\lambda_{\mathscr D}$ extends to an operator in $\bddlin(L^p(\R^d;X),L^p(\R^d;Y))$ by the more satisfactory version that the average $\ave{\lambda}$ alone has this property.

Moreover, recalling the definition of the shifted dyadic systems $\mathscr D$, the expectation over the probability space $(\{0,1\}^d)^{\Z}$ may be replaced by more concrete expressions:
\begin{equation*}
\begin{split}
  \ave{\lambda}(f,g)
  &=\E\sum_{P\in\mathscr D^0}[\ave{f}_{P\dot+\theta}\tau(1,D_{P\dot+\theta} g)+\tau(D_{P\dot+\theta} f,1)\ave{g}_{P\dot+\theta}] \\
  &=\sum_{P\in\mathscr D^0}\E[\ave{f}_{P\dot+\theta}\tau(1,D_{P\dot+\theta} g)+\tau(D_{P\dot+\theta} f,1)\ave{g}_{P\dot+\theta}] \\
  &=\sum_{P\in\mathscr D^0}\abs{P}^{-1}\int_{[0,\ell(P))^d}[\ave{f}_{P+u}\tau(1,D_{P+u} g)+\tau(D_{P+u} f,1)\ave{g}_{P+u}]\ud u \\
  &=\sum_{j\in\Z}\sum_{k\in\Z^d}\int_{[0,1)^d} [\ave{f}_{2^{-j}([0,1)^d+k+v)}\tau(1,D_{2^{-j}([0,1)^d+k+v)} g) \\
  &\qquad\qquad\qquad  +\tau(D_{2^{-j}([0,1)^d+k+v)} f,1)\ave{g}_{2^{-j}([0,1)^d+k+v)}]\ud v \\
  &=\sum_{j\in\Z}\int_{\R^d} [\ave{f}_{2^{-j}([0,1)^d+y)}\tau(1,D_{2^{-j}([0,1)^d+y)} g) \\
  &\qquad\qquad\qquad  +\tau(D_{2^{-j}([0,1)^d+y)} f,1)\ave{g}_{2^{-j}([0,1)^d+y)}]\ud y.
\end{split}
\end{equation*}
Computations at least superficially similar to the one above appear in \cite{Bourgain:86}, where they are used to deduce the (translation-invariant) Littlewood--Paley inequalities from (non-translation-invariant) dyadic martingale inequalities via averaging. As of the time of writing, we do not know whether these considerations may be used to obtain some new results for vector-valued paraproducts, but this seems like a topic worthy of some further exploration.


\end{document}